\documentclass[11pt]{amsart}

\usepackage{geometry}
\geometry{tmargin=2.8cm,bmargin=2.8cm,lmargin=2.5cm,rmargin=2.5cm} 

\newtheorem{propo}{Proposition}[section]

\newtheorem{lemma}[propo]{Lemma}

\newtheorem{theo}[propo]{Theorem}

\theoremstyle{definition}

\theoremstyle{remark}
\newtheorem{remar}[propo]{Remark}

\newcommand{\Irr}{\mathop{\rm Irr}\nolimits}

\newcommand{\ZZ}{\mathop{\mathbb Z}\nolimits}

\newcommand{\al}{\alpha}

\newcommand{\de}{\delta}
\newcommand{\ep}{\varepsilon}
\newcommand{\lam}{\lambda }

\newcommand{\om}{\omega }

\newcommand{\si}{\sigma }

\begin{document}

\title[Weight zero in irreducible  representations]{Weight zero in tensor-decomposable irreducible  representations of simple algebraic groups}

\author[A. Baranov]{Alexander Baranov}
\address{Department of Mathematics, University of Leicester, Leicester, LE1 7RH, UK}
\email{baranov@le.ac.uk, alex.baranov.uk@gmail.com}
\thanks{Supported by University of Leicester}

\author[A. Zalesski]{Alexandre Zalesski}

\address{Department of Physics, Mathematics and Informatics, National Academy of Sciences of Belarus,
66 Prospekt  Nezavisimosti, Minsk, Belarus}
\email{alexandre.zalesski@gmail.com}


\subjclass[2000]{20G05, 20G40} \keywords{Simple algebraic group,  Modular representations, Weight  $0$}

\dedicatory{Dedicated to A.V. Yakovlev on occasion of his 80\textsuperscript{th} birthday}

\begin{abstract}
Let $G$ be a simple algebraic group in defining characteristic $p>0$, and let $V$ be an irreducible $G$-module
which is the tensor product of exactly two non-trivial modules.
 We obtain a criterion for $V$ to have the zero weight. In addition, we provide a uniform criterion for an
 irreducible representation of a simple Lie algebra over the complex numbers  to have a multiple of a prescribed fundamental weight.
\end{abstract}

\maketitle

\section{Introduction}

The structure of the weight system is one of the major characteristics of a representation  of a simple algebraic group.
If the ground field is of characteristic $p=0$ then this structure is well understood and the general theory provides efficient tools for solving various specific questions. For $p>0$ a key result is due to Premet \cite{Pr}, on coincidence of the weight systems of irreducible representations with $p$-restricted highest weight and those with the same highest weight for $p=0$ (with some exceptions treated in \cite{z09}). In general, due to Steinberg's tensor product theorem, the weight system of an
 irreducible representation with non-restricted highest weight 
 is the sum of those for the tensor-indecomposable factors. However, this fact is insufficient to decide when a given weight
 occurs in a given representation. This paper is addressed to the following problem.

\medskip
\noindent
{\bf  Problem.}
For a simple algebraic group $G$ in defining
characteristic $p > 0$ determine the $p$-modular irreducible representations that have weight $0$.

\medskip

In this paper we solve the problem in the most natural special case where the representation  in question is a tensor product of exactly two tensor-indecomposable non-trivial factors.  In full generality the problem looks untractable. In fact, we see no way to approach it even in the case of three tensor factors.





To state the main results of the paper we introduce some notation. For a simple,
simply connected  algebraic group $G$ of type $X_n$ ($X=A,\dots,G$)
we denote by $\om_1^X,\ldots , \om_n^X$ (or simply, by $\om_1,\ldots , \om_n$) the fundamental weights of the corresponding weight system. A weight of $G$ is called dominant if
it is an integral linear combination of $\om_1,\ldots , \om_n$ with non-negative coefficients, and $p$-restricted if the coefficients do not exceed $p-1$.
We denote by $\Omega_0^+(G)$ the set of all dominant weights
$\sum_{i=1}^n a_i\om_i$ with $a_n=0$.
For a dominant weight $\lam$ we denote by $V_\lam$ the irreducible  $G$-module with highest weight $\lam$ and by $\lam^*$ the highest weight of the module  dual to $V_\lam$.
 A radical weight is one which is an integral linear combination of
the roots. For the notion of a miniscule weight see \cite[Ch. VIII, \S 7.3]{Bo8}.
For weights $\om,\si$ we write $\om\succeq \si$ if $\om-\si$ is an integral linear combination of
the roots with non-negative coefficients.  In addition, $(\om,\si)$ is the standard inner product 
on the weight lattice
of $G$, see \cite{Bo}, which is the main source of our terminology for weight systems.
For
a dominant weight  $\lam=\sum_{i=1}^n a_i\om_i^B$  of $B_n$ we denote by
$\lam^C=\sum_{i=1}^n a_i\om_i^C$ the corresponding dominant weight of $C_n$.

Let $\lam$ be the highest weight  of a rational irreducible  $G$-module $V$. It follows from Steinberg's tensor product theorem (see below) that $V$ is tensor-indecomposable if and only if  $\lam=p^i\lam_0$ for an integer $i\geq 0$ and a $p$-restricted weight $\lam_0$ (with a few exceptions for $p=2,3$); in non-exceptional cases $V$ is a tensor product of two tensor-indecomposable non-trivial modules if and only if  $\lam=p^k(\lam_0+p^i\lam_1)$, where $\lam_0,\lam_1$ are non-zero $p$-restricted weights and integers $k\geq 0, i\geq 1$.
In the exceptional cases $i=0$ is allowed. 
Our main results are the following theorems.

\begin{theo}\label{th7} Let $G$ be a simple algebraic group of rank $n$ in defining characteristic
$ p>0$.
If $G\in\{ B_n,C_n,G_2\}$, assume $p>2$.
Let $V$ be an irreducible  $G$-module of highest weight $\om$.
Suppose $V$ is the tensor product of two non-trivial tensor-indecomposable $G$-modules,
so
$\om=p^k\lam$ with $\lam=\lam_0+p^i\lam_1$, where
$\lam_0,\lam_1$ are $p$-restricted dominant weights and $k,i\geq 0$.
Then $V$ has weight $0$ if and only if
$\lam $ is radical and either $\lam_1$ is radical or
$\lam_1\succeq \om_j$ for some miniscule $\om_j$ and
$\lam^*_0\succeq p^i\om_j$ (equivalently,
$(\lam^*_0,\om_j)\geq p^i(\om_j,\om_j)$).\end{theo}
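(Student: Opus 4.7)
The outer Frobenius twist by $p^k$ rescales all weights by $p^k$ and preserves the existence of weight $0$, so I would assume $k=0$ and $\om=\lam$. By Steinberg's tensor product theorem $V\simeq V_{\lam_0}\otimes V_{\lam_1}^{(i)}$, with weight system
$\{\mu_0+p^i\mu_1:\mu_0\in \Pi(V_{\lam_0}),\ \mu_1\in \Pi(V_{\lam_1})\}$,
where $\Pi(\cdot)$ denotes the weight set. Hence $0\in\Pi(V)$ iff there exist $\mu_0,\mu_1$ with $\mu_0=-p^i\mu_1$; equivalently, using Weyl invariance together with the identity $\Pi(V_{\lam_0})=-\Pi(V_{\lam_0^*})$, there exists a dominant $\mu\in\Pi(V_{\lam_1})$ with $p^i\mu\in\Pi(V_{\lam_0^*})$. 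Under the stated hypotheses on $(G,p)$, Premet's theorem \cite{Pr} identifies the weight sets of these $p$-restricted modules with their characteristic-zero analogues, so for dominant $\nu$, $\nu\in\Pi(V_\lam)$ iff $\nu\preceq\lam$ and $\lam-\nu$ lies in the root lattice $Q$.

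The necessity of $\lam$ being radical is then immediate. I would split on whether $\mu=0$: if $\mu=0$, then $0\in\Pi(V_{\lam_1})$ forces $\lam_1\in Q$, yielding the first alternative. If $\mu\neq 0$, then $\mu$ represents a nontrivial coset of $P/Q$. The structural classification \cite[Ch.~VIII, \S 7.3]{Bo8} shows that each nontrivial coset of $P/Q$ contains a unique minuscule fundamental weight $\om_j$, which is $\preceq$-minimal among the nonzero dominant weights in its coset. Consequently $\mu\succeq\om_j$, and then $\om_j\preceq\mu\preceq\lam_1$ gives $\om_j\in\Pi(V_{\lam_1})$, while $p^i\om_j\preceq p^i\mu\preceq\lam_0^*$ gives $p^i\om_j\in\Pi(V_{\lam_0^*})$; this is the second alternative. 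The converse is direct: if $\lam_1\succeq\om_j$ and $\lam_0^*\succeq p^i\om_j$ with $\om_j$ minuscule, the coset constraints are supplied by $\succeq$, Premet yields $\om_j\in\Pi(V_{\lam_1})$ and $p^i\om_j\in\Pi(V_{\lam_0^*})$, and $0=(-p^i\om_j)+p^i\om_j\in\Pi(V)$. One verifies along the way that the minuscule branch automatically forces $\lam$ radical, so the two alternatives partition the situation correctly.

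For the inner-product reformulation, writing $\lam_0^*-p^i\om_j=\sum_k c_k\al_k$ with $c_k\in\ZZ$ and pairing with $\om_j$ annihilates all terms with $k\neq j$ (since $(\al_k,\om_j)=0$ there), leaving $(\lam_0^*,\om_j)-p^i(\om_j,\om_j)=c_j(\al_j,\om_j)$. This gives the forward direction at once, and the converse exploits the minuscule property that the $\om_j$-coordinate controls membership in the dominance cone on the coset $p^i\om_j+Q$.

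\textbf{Main obstacle.} The chief technical input is the minimality of minuscule weights inside their $P/Q$-class — that every nonzero dominant weight in a nontrivial class dominates the minuscule one — and the converse of the inner-product equivalence. Both rest on detailed root-system data and may demand a type-by-type verification over $A_n,B_n,C_n,D_n,E_6,E_7$; for $E_8,F_4,G_2$ there is no minuscule weight, so the second alternative is vacuous and only the radical case survives, matching the hypothesis $p>2$ imposed on $B_n,C_n,G_2$ precisely to keep Premet's theorem applicable.
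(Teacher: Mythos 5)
Your overall strategy --- reduce to $k=0$, use Steinberg to write $\Pi(V)=\Pi(V_{\lam_0})+p^i\,\Pi(V_{\lam_1})$, pass to a common \emph{dominant} weight $\mu$ with $p^i\mu\in\Pi(V_{\lam_0^*})$, and then combine Premet's theorem with the minimality of the minuscule weight in its class modulo the root lattice --- is essentially the paper's own route (Lemma \ref{pr1} and Theorem \ref{th6}). There are, however, two genuine gaps. First, Theorem \ref{th7} places no restriction on $p$ for $F_4$ and only requires $p>2$ for $G_2$, so the cases $F_4$ with $p=2=e(F_4)$ and $G_2$ with $p=3=e(G_2)$ are included; there Premet's theorem (which needs $p>e(G)$) is unavailable, and $p$-restricted modules can even be tensor-decomposable, so $i=0$ occurs. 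Your closing claim that the stated hypotheses are ``precisely to keep Premet's theorem applicable'' is false for exactly these two cases. The paper disposes of them separately (Lemma \ref{ex3}, which rests on Proposition \ref{zz9} and hence on \cite{z09}): every weight of $F_4$ and $G_2$ is radical, and every irreducible module in these characteristics has weight $0$, so both sides of the equivalence hold --- but that last fact is a nontrivial input for $p=e(G)$ which your argument does not supply.

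Second, the parenthetical equivalence $\lam_0^*\succeq p^i\om_j\Longleftrightarrow(\lam_0^*,\om_j)\geq p^i(\om_j,\om_j)$ (given that $\lam_0^*-p^i\om_j$ is radical) is only half-proved: your pairing computation gives the forward implication, but the converse --- that non-negativity of the single coefficient $c_j$ forces non-negativity of all the $c_k$ --- is the real content, and your appeal to ``the minuscule property'' is not an argument. The paper proves the general statement (Theorem \ref{th1}, via Lemma \ref{r6}) from the inverse-Cartan-matrix inequality $(\om_i,\om_j)(\om_k,\om_k)\geq(\om_i,\om_k)(\om_k,\om_j)$ of Proposition \ref{an5}, verified type by type, and it crucially uses that $\lam_0^*$ is dominant; minusculity of $\om_j$ plays no role there. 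A further minor slip: your dichotomy should be on whether $\lam_1$ is radical, not on whether $\mu=0$; a nonzero radical $\mu$ lies in the trivial class, which contains no minuscule weight, but in that event $\lam_1$ is radical and one lands in the first alternative.
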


Theorem \ref{th7} solves the problem for the generic case. Our next theorem settles the exceptional case of  $G\in\{ B_n,C_n,G_2\}$ for $p=2$.

\begin{theo}\label{th8} In  notation of Theorem {\rm \ref{th7}} suppose that $p=2$  and $G\in\{ B_n,C_n,G_2\}$. Then V has weight $0$ if and only if $\lam$ is radical and one of the following holds.
\begin{itemize}
\item[$(1)$]
  $G=C_n$ and either \\ $(1.1)$   $i\geq 0$, $\lam_0\in \Omega_0^+(G)$, $\lam_1=\om_n$
and $\lam_0\succeq 2^i\om_n$
(equivalently, $(\lam_0,\om_n)\geq 2^in$), or  \smallskip \\ \smallskip
$(1.2)$     $i\ge1$, $\lam_0,\lam_1\in \Omega_0^+(G)$ and $\lam_1$ is radical; or \\ \smallskip
$(1.3)$  $i\ge1$, $\lam_0,\lam_1\in \Omega_0^+(G)$,
$\lam_1\succeq \om_1$
and $\lam_0\succeq 2^i\om_1$ (equivalently, $(\lam_0,\om_1)\geq 2^i$).

\medskip
\item[$(2)$] 
$G=B_n$ and  either \\  \smallskip
$(2.1)$   $i\ge1$, $\lam_0\in \Omega_0^+(G)$, $\lam_1=\om_n$,
$\lam_0^C+2^{i-1}\om_n^C$ is a radical weight of $C_n$,
$\lam_0^C\succeq 2^{i-1}\om_n^C$; or \\
$(2.2)$  $i\ge1$, $\lam_0,\lam_1\in \Omega_0^+(G)$,
 $\lam_0^C+2^{i}\lam_1^C$ is a radical weight of $C_n$ satisfying $(1.2)$ or $(1.3)$.
\medskip

\item[$(3)$] 
$G=G_2$ and  either \\
$(3.1)$ $i\ge1$ and $\om_1\notin\{\lam_0, \lam_1\}$; or \\ \smallskip
$(3.2)$ $i=1$, $\lam_0=\om_1+\om_2$, $\lam_1=\om_1$.
\end{itemize}
\end{theo}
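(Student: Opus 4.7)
My plan is to parallel the strategy of Theorem~\ref{th7}, but with careful attention to the exceptional behaviour of $p$-modular weight systems at $p=2$ in types $B_n,C_n,G_2$. Steinberg's tensor product theorem (in the form available for these types at $p=2$, in which the exponent $i=0$ is also permitted) yields $V\cong V_{\lam_0}\otimes V_{\lam_1}^{(2^i)}$; the outer twist by $2^k$ does not affect the weight set. Consequently $0\in\Om(V)$ if and only if there exists $\mu\in\Om(V_{\lam_1})$ with $2^i\mu\in\Om(V_{\lam_0}^*)$. Since every weight of $V_\lam$ lies in $\lam+Q$, with $Q$ the root lattice, the radicality of $\lam$ is necessary; we impose it throughout.

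For case (1), $G=C_n$, the essential input is that for $2$-restricted $\lam_0\in\Om_0^+(G)$ the weight system of $V_{\lam_0}$ coincides with its characteristic-$0$ analogue (this is the content of Premet's theorem \cite{Pr} outside the exceptional list of \cite{z09}, which in the relevant range is essentially the complement of $\Om_0^+$). Whenever $\lam_0,\lam_1\in\Om_0^+$ we can therefore follow the proof of Theorem~\ref{th7} \emph{verbatim} within $\Om_0^+$, the dichotomy on $\lam_1$ (radical versus dominating $\om_1$, the unique miniscule weight of $C_n$ lying in $\Om_0^+$) mirroring the one in Theorem~\ref{th7}; this yields subcases (1.2)--(1.3). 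Subcase (1.1), where $\lam_1=\om_n$, falls outside the Premet range and requires a direct computation of $\Om(V_{\om_n})$ at $p=2$; the inner-product reformulation $(\lam_0,\om_n)\geq 2^i n$ of $\lam_0\succeq 2^i\om_n$ is then immediate from $(\om_n,\om_n)=n$ in the standard normalization.

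For case (2), $G=B_n$, we exploit the exceptional isogeny between $B_n$ and $C_n$ available at $p=2$, whose composition in either direction is a power of Frobenius. Under this isogeny, a $2^i$-Frobenius twist on the $B_n$ side absorbs one factor of $2$, translating to a $2^{i-1}$-twist on the $C_n$ side; this accounts for the shift of exponents in conditions (2.1)--(2.2) and for the requirement that $\lam_0^C+2^{i-1}\om_n^C$ (respectively $\lam_0^C+2^i\lam_1^C$) be a radical weight of $C_n$. Once the problem has been translated, part~(1) applied to the resulting $C_n$-modules yields both subcases.

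For case (3), $G=G_2$, all dominant weights are radical (since here the weight lattice coincides with the root lattice), and the only non-trivial $2$-restricted dominant weights are $\om_1$, $\om_2$ and $\om_1+\om_2$; the weight systems of the corresponding irreducibles at $p=2$ are small enough to tabulate by hand. An exhaustive check of the nine pairs $(\lam_0,\lam_1)$ of non-trivial $2$-restricted dominant weights produces exactly (3.1) and (3.2), the latter being a single sporadic coincidence arising from the specific structure of $V_{\om_1}$ at $p=2$. The principal obstacle throughout is identifying, using \cite{z09}, exactly which dominant radical $\nu\preceq \lam_0$ fail to be weights of $V_{\lam_0}$ at $p=2$ -- especially for $V_{\om_n}$ of $C_n$ and its $B_n$ counterpart -- after which the remaining verifications are routine bookkeeping.
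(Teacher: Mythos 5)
Your proposal is correct and follows essentially the same route as the paper's own proof: type $C_n$ rests on \cite[Proposition 13]{z09} (Lemma \ref{zz9b}) together with the argument of Theorem \ref{th6} restricted to $\Omega_0^+(G)$ plus a separate treatment of $V_{\om_n}$, type $B_n$ is transported to $C_n$ through the special isogeny $\varphi_{CB}$ exactly as in Theorem \ref{b22}, and type $G_2$ is settled by direct enumeration as in Lemma \ref{g22a}. The only point your sketch leaves implicit is the ``only if'' direction for the decompositions permitted by Lemma \ref{pp1}(3) in which $\lam_0=\om_n$ (for $C_n$) or the corresponding configurations for $B_n$: these must be shown never to yield weight $0$, which the paper does by noting that the weights of $V_{\om_n}$ form the single Weyl orbit $W\om_n$ and hence contain no weight of the form $2^i\si$ with $i\ge 1$.
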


Conditions for a weight $\sum a_i\om_i$ to be radical can be easily read off the expressions of the fundamental weights in terms of the simple roots given in \cite[Planche I - Planche IX]{Bo} (and elsewhere), however, for reader's convenience we
provide them  at the end of this paper in Table \ref{tr}.
The condition for $\lam^*_0$ in Theorem \ref{th7} is explicitly given in terms of the $a_i$'s in Table  \ref{tc}.
A general criterion for $\om\succeq m\om_j$ (used in  Theorems \ref{th7} and \ref{th8})  is given in the following  Theorem \ref{th1},
which is also of independent interest.

\begin{theo}\label{th1} Let $\Omega$ be the weight lattice of  a simple Lie algebra $L$ over the complex numbers,
and let $\om_1,\ldots , \om_n$ be the fundamental weights of $\Omega$. Let $m> 0$ and $ 1\leq j\leq n$
be  integers and  let $\om\in \Omega$.

$(1)$ $\om\succeq m\om_j$ if and only if  $\om- m\om_j$ is radical and
$(\om,\om_j)\geq m(\om_j,\om_j)$.

$(2)$ Suppose that $\om$ is dominant. Let $\rho$ be an irreducible representation  of $L$ with highest weight  $\om$. Then $m\om_j$ is a weight of $\rho$ if and only if   $\om- m\om_j$ is radical and $(\om,\om_j)\geq m(\om_j,\om_j)$.
\end{theo}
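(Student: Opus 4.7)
The plan is to deduce $(2)$ from $(1)$ via standard weight-theoretic facts and to attack $(1)$ directly through the inverse $B=C^{-1}$ of the Cartan matrix $C$ of $L$. For $(2)$, I would invoke the classical description of the weights of the irreducible $L$-module $V_\om$: for dominant $\om$, a dominant weight $\si$ occurs as a weight of $V_\om$ if and only if $\om-\si\in Q$ (radical in the paper's sense) and $\om\succeq\si$ in the root order. Since $m>0$ makes $m\om_j$ dominant, this reduces $(2)$ at once to the criterion of $(1)$.

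The forward direction of $(1)$ is immediate: expanding $\om-m\om_j=\sum_i c_i\al_i$ with $c_i\in\ZZ_{\geq 0}$ shows $\om-m\om_j\in Q$, and pairing with $\om_j$ via the orthogonality $(\al_i,\om_j)=\delta_{ij}(\al_j,\al_j)/2$ collapses the sum to $(\om-m\om_j,\om_j)=c_j(\al_j,\al_j)/2\geq 0$.

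The converse carries the content, which I would prove under a dominance assumption on $\om$ (the setting of all the applications in Theorems \ref{th7}--\ref{th8}). Writing $\om=\sum_k a_k\om_k$ with $a_k\geq 0$ and $\om-m\om_j=\sum_i c_i\al_i$ with $c_i\in\ZZ$, and using $\om_k=\sum_i B_{ki}\al_i$, one obtains
\[
c_i=\sum_k a_k B_{ki}-mB_{ji}=(a_j-m)B_{ji}+\sum_{k\neq j}a_k B_{ki};
\]
the inner-product hypothesis is precisely $c_j\geq 0$. If $a_j\geq m$, every $c_i\geq 0$ since $B_{ki}>0$ for all $i,k$ in a simple Lie algebra. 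Otherwise, rewrite $c_j\geq 0$ as $\sum_{k\neq j}a_k B_{kj}\geq (m-a_j)B_{jj}$ and apply the key inequality
\[
B_{ki}B_{jj}\geq B_{kj}B_{ji}\qquad (1\leq i,j,k\leq n)
\]
by dividing through by $B_{jj}$, scaling by $a_k$, and summing over $k\neq j$; this gives $\sum_{k\neq j}a_k B_{ki}\geq (m-a_j)B_{ji}$, hence $c_i\geq 0$.

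The main obstacle is establishing the last inequality on $C^{-1}$. I do not expect a uniform proof; a case-by-case check across the Dynkin classification should suffice, using closed-form expressions for the classical types (for instance $(C^{-1})_{ij}=\min(i,j)(n+1-\max(i,j))/(n+1)$ for $A_n$) and direct inspection of the fixed matrices in Bourbaki \cite[Planches I--IX]{Bo} for $E_6,E_7,E_8,F_4,G_2$.
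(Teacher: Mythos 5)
Your proposal is correct and follows essentially the same route as the paper: part (2) is reduced to part (1) via the standard description of the dominant weights of $V_\om$, the forward direction of (1) is the easy orthogonality computation, and the converse rests on the inequality $B_{ki}B_{jj}\geq B_{kj}B_{ji}$ for the inverse Cartan matrix, which is exactly the paper's Proposition \ref{an5} (proved there, as you propose, by a type-by-type check over the Dynkin classification) fed into the same positivity argument as the paper's Lemma \ref{r6}. Your explicit restriction of the converse to dominant $\om$ is well judged rather than a weakness: the paper states part (1) for arbitrary $\om\in\Omega$, but its own proof of Lemma \ref{r6} uses the non-negativity of the coefficients $a_k$ at precisely the point where you do, and the unrestricted statement in fact fails (e.g.\ for $A_2$ with $\om=6\om_1-4\om_2$, $m=1$, $j=1$ one has $\om-\om_1=2\al_1-\al_2$ radical and $(\om,\om_1)=\tfrac{8}{3}\geq(\om_1,\om_1)=\tfrac{2}{3}$, yet $\om\not\succeq\om_1$), so dominance is needed both for the statement and for all of its applications in Theorems \ref{th7} and \ref{th8}.
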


Note that the condition $(\om,\om_j)\geq m(\om_j,\om_j)$
above can be written in a more explicit form in terms of the coefficients of the expression $\om=\sum_i a_i\om_i$, see Remark \ref{rw}.
In particular, Theorem \ref{th1} gives a simple criterion
for the weight system of an irreducible representation of a simple Lie algebra
to contain a multiple of a given fundamental weight, in terms of a single inequality for the coefficients of the highest weight.
The proof of  Theorem \ref{th1} is based on the following interesting geometric property of the fundamental weights, which is also equivalent to
non-negativity of certain $2\times 2$ minors of the inverse Cartan matrix of a simple Lie algebra
(see Proposition \ref{an5}).

\begin{propo}\label{an6} For~ all ~$1\leq i,j,k\leq n$ we have
$(\om_i,\om_j)(\om_k,\om_k)\geq (\om_i,\om_k)(\om_k,\om_j)$.
\end{propo}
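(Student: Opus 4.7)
My plan is to reinterpret the inequality as non-negativity of an inner product of fundamental weights of a proper semisimple Lie subalgebra, thereby reducing it to a well-known fact. The cases $i=k$ or $j=k$ give trivial equalities, so I would assume $i,j,k$ distinct. Fixing $k$, the orthogonal projection of $\om_i$ onto $\om_k^\perp$ is
\begin{equation*}
\om_i' \;:=\; \om_i - \frac{(\om_i,\om_k)}{(\om_k,\om_k)}\,\om_k \qquad (i\neq k),
\end{equation*}
and a direct expansion yields $(\om_k,\om_k)(\om_i',\om_j') = (\om_i,\om_j)(\om_k,\om_k) - (\om_i,\om_k)(\om_j,\om_k)$. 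Since $(\om_k,\om_k)>0$, the proposition becomes equivalent to $(\om_i',\om_j')\geq 0$.

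The crucial step is to identify the $\om_i'$ with fundamental weights of the Levi subalgebra $L_k$ of $L$ whose simple roots are $\{\alpha_l : l \neq k\}$. Its derived subalgebra $L_k'$ is semisimple, with Dynkin diagram obtained by deleting node $k$ from that of $L$. Because $(\om_k,\alpha_l) = (\alpha_l,\alpha_l)\delta_{lk}/2$ vanishes for $l\neq k$, we get $\om_k^\perp = \mathrm{span}\{\alpha_l : l \neq k\}$, so each $\om_i'$ lies in the weight space of $L_k'$. Moreover,
\begin{equation*}
\langle \om_i',\alpha_j^\vee\rangle \;=\; \langle \om_i,\alpha_j^\vee\rangle - \frac{(\om_i,\om_k)}{(\om_k,\om_k)}\langle \om_k,\alpha_j^\vee\rangle \;=\; \delta_{ij} \qquad (j\neq k),
\end{equation*}
which identifies $\om_i'$ as the $i$th fundamental weight of $L_k'$.

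It then remains to apply classical facts. I would decompose $L_k'$ into simple ideals corresponding to the connected components of its Dynkin diagram. Fundamental weights belonging to distinct components are orthogonal, while those in the same simple component have non-negative inner product, by the classical non-negativity of the entries of the inverse Cartan matrix of a simple Lie algebra (see e.g.\ \cite{Bo}). Either way $(\om_i',\om_j')\geq 0$, completing the proof. The only delicate point will be the identification of the $\om_i'$ with fundamental weights of $L_k'$; once this is in place, the argument is uniform in the type of $L$ and sidesteps any case analysis or direct inspection of Cartan matrix minors.
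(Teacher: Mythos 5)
Your proof is correct, but it takes a genuinely different route from the paper. The paper proves the inequality by direct case analysis: an explicit formula $e_{ij}=i(n+1-j)/(n+1)$ for type $A_n$, computation of $(\om_i,\om_j)$ in the orthonormal $\ep$-basis for types $B_n$, $C_n$, $D_n$, and inspection of the matrix $E(G)$ for the exceptional types. You instead observe that $(\om_i,\om_j)(\om_k,\om_k)-(\om_i,\om_k)(\om_k,\om_j)$ is, up to the positive factor $(\om_k,\om_k)$, the inner product of the orthogonal projections of $\om_i$ and $\om_j$ onto $\om_k^{\perp}=\mathrm{span}\{\al_l: l\neq k\}$, and that these projections are precisely the fundamental weights of the semisimple Levi subalgebra obtained by deleting node $k$; the claim then reduces to non-negativity of $(\varpi_i,\varpi_j)$ for fundamental weights of a semisimple root system, which follows from orthogonality across components together with the positivity of the entries of the inverse Cartan matrix of each simple component (a fact the paper itself invokes, in the form $d_{ij}>0$, in the proof of Lemma \ref{r6}). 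I checked the two computations on which your argument hinges — $(\om_k,\om_k)(\om_i',\om_j')=(\om_i,\om_j)(\om_k,\om_k)-(\om_i,\om_k)(\om_j,\om_k)$ and $\langle\om_i',\al_j^{\vee}\rangle=\de_{ij}$ for $j\neq k$ — and both are right; note also that your reduction handles $i=j\neq k$ automatically (it is then just $(\om_i',\om_i')\ge0$), so you need not treat that case separately via Cauchy--Schwarz as the paper does. The trade-off: your argument is uniform, type-free, and explains conceptually why the $2\times2$ minors $d_{ij}d_{kk}-d_{ik}d_{kj}$ of the inverse Cartan matrix are non-negative (they are Gram-matrix entries for a Levi subalgebra), whereas the paper's computation, though longer, produces the explicit values of $(\om_i,\om_j)$ that are reused later (in Remark \ref{rw} and Table \ref{tc}), so the case analysis is not wasted effort in the context of the paper.
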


The last two results can be regarded as a contribution to the weight system theory.

\medskip
{\it Notation} Let $G$ be a simple, simply connected algebraic group of rank $n$. To say that
 $G$ is of type $A_n$ etc., we simply write $G=A_n$.  The Weyl group of $G$ is denoted by $W.$ We denote by $\al_1,\ldots , \al_n$ and $\om_1,\ldots , \om_n$, respectively, the simple roots and the fundamental weights of the weight lattice $\Omega(G)$ of $G$. The zero weight (or weight 0) is often denoted by 0. We write $\Omega^+(G)$ for the set of dominant weights, which are exactly the non-negative integral linear combinations of $\om_1,\ldots , \om_n$ (ordered as in \cite{Bo}).
We denote by $\Omega_0^+(G)$ the set of all dominant weights
$\sum_{i=1}^n a_i\om_i$ with $a_n=0$.
For a dominant weight  $\lam=\sum_{i=1}^n a_i\om_i^B$  of $B_n$ we denote by
$\lam^C=\sum_{i=1}^n a_i\om_i^C$ the corresponding dominant weight of $C_n$.

 The expressions $\om_i=\sum d_{ij}\al_j$ of $\om_i$ in terms of $\al_j$
for $1\leq i,j\leq n$ are given in
\cite[Planche I - Planche IX]{Bo}; the corresponding matrix $\Delta=(d_{ij})$ is the inverse of the Cartan matrix of $G$.

There is a standard partial ordering of elements  of $\Omega$; specifically, for $\mu,\mu'\in\Omega$ we write
$\mu\preceq\mu'$ and $\mu'\succeq \mu$ if and only if  $\mu'-\mu$ 
is a sum of simple roots.
Every irreducible  $G$-module has a unique weight $\om$ such that $\mu\prec\om$ for every weight $\mu$ of $V$ with $\mu\neq \om$.
This is called the  highest weight of $V$. There is a bijection between $\Omega^+$ and
the set $\Irr (G)$ of rational irreducible $G$-modules,
so for $\om\in\Omega^+$ we denote by $V_\om$ the irreducible  $G$-module with highest weight $\om$.
A weight $\om=\sum a_i\om_i$ is called $p$-restricted if $0\leq a_1,\ldots , a_n<p$, and we call a module $V_\om$ $p$-restricted if $\om$ is $p$-restricted.

\section{Preliminaries}

Every dominant weight $\om=\sum a_i\om_i$ has a unique $p$-adic expansion $\om=\lam_0+p\lam_1+\cdots +p^k\lam _k$ for some $k$, where $\lam_0,\ldots , \lam_k$ are $p$-restricted dominant weights. This yields the tensor product decomposition $V_\om=V_{\lam_0}\otimes V_{p\lam_1}\otimes\cdots \otimes V_{p^k\lam_k}$, which is known as Steinberg's  tensor product theorem
(see \cite[Theorem 41]{St}). We set $e(G)=1$ for $G\in\{A_n,E_6,E_7,E_8\}$, $e(G)=2$ for $G\in\{B_n,C_n, F_4\}$ and $e(G)=3$ for $G=G_2$. The following
result describes $p$-restricted tensor-indecomposable modules (note that (3) is a special case of (2)).

\begin{lemma}\label{pp1} \cite[Corollary of Theorem 41]{St},\cite[1.6]{Se}  Let $G$ be a simple algebraic group and let $V$ be a $p$-restricted irreducible  $G$-module with
 highest weight $\om$.
\begin{itemize}
\item[$(1)$] If $p\ne e(G)$ then $V$ is tensor-indecomposable.
\item[$(2)$] Suppose $p=e(G)$.
Let $\om=\sum a_i\om_i$  and let $\om=\lam_1+\lam_2$,
where $\lam_1=\sum_{i:\, \al_i\, \text{\rm is short}} a_i\om_i$ and
$\lam_2=\sum_{i:\, \al_i\,\text{\rm is long}} a_i\om_i$. Then
$V=V_{\lam_1}\otimes V_{\lam_2}$, and $V_{\lam_k}$ is tensor-indecomposable for $k=1,2$.
In particular, $V$ is    tensor-indecomposable if and only if  $\lam_1=0$ or $\lam_2=0$.
\item[$(3)$] Suppose $p=2$ and $G=B_n,C_n$. Then $V$ is tensor-indecomposable
if and only if  $\om=\om_n$ or $\om\in \Omega_0^+(G)$.
\end{itemize}
\end{lemma}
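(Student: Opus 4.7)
The plan is to invoke Steinberg's tensor product theorem in conjunction with the special isogeny of $G$ that exists in characteristic $p=e(G)$. For part (1), when $p\neq e(G)$, Steinberg's Theorem~41 in \cite{St} classifies every rational irreducible $G$-module uniquely as a tensor product of Frobenius twists of $p$-restricted irreducibles. If a $p$-restricted $V_\om$ admitted a nontrivial decomposition $V_\om\cong V_\mu\otimes V_\nu$ with $\mu,\nu\neq 0$, then matching highest weights gives $\om=\mu+\nu$ with both $\mu,\nu$ restricted, and one then has to decide whether the tensor product of two nontrivial restricted irreducibles can itself be irreducible. A character or Weyl-module dimension comparison shows that this happens only when $G$ admits a non-Frobenius special isogeny, which exists precisely for $G\in\{B_n,C_n,F_4,G_2\}$ in characteristic $e(G)$; outside these cases (1) follows.

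For part (2), in characteristic $p=e(G)$ there is a special isogeny $\pi\colon G\to G'$ whose composition with its dual is the Frobenius, where $G'$ has the root data of $G$ but with long and short roots interchanged (so $G'=G$ for $F_4,G_2$, while $B_n'=C_n$ and $C_n'=B_n$). Pulling back an irreducible $G'$-module via $\pi^*$ yields an irreducible $G$-module whose highest weight, expressed in the fundamentals of $G$, uses only the coefficients attached to one root length. Writing $\om=\lam_1+\lam_2$ as in the statement, a direct weight-matching argument shows that $V_{\lam_1}\otimes\pi^*V_{\lam_2^{\vee}}$ is irreducible with highest weight $\om$, hence equal to $V_\om$. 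Each factor $V_{\lam_k}$ is itself tensor-indecomposable, because any further nontrivial decomposition of a restricted module must come from the special isogeny construction, which requires both short- and long-root coefficients of the highest weight to be nonzero; for $\lam_k$ one of these blocks vanishes by construction.

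Finally, (3) is the specialization of (2) to $p=2$ and $G\in\{B_n,C_n\}$: the partition of simple roots by length isolates $\al_n$ from $\al_1,\ldots,\al_{n-1}$, and $2$-restrictedness forces each $a_i\in\{0,1\}$. Consequently, one of $\lam_1,\lam_2$ vanishes precisely when $a_n=0$ (so $\om\in\Omega_0^+(G)$) or when $a_n=1$ and all other $a_i=0$ (so $\om=\om_n$), which is the stated criterion. The main obstacle in carrying out the plan is the precise construction of $\pi$ and verification that $\pi^*$ transports highest weights as described; this is exactly the technical core of \cite{St} and \cite{Se}, after which everything else reduces to weight-bookkeeping.
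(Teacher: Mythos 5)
The paper does not actually prove this lemma: it is quoted directly from Steinberg (Corollary of Theorem 41) and Seitz (1.6), so the only ``proof'' on record is the citation, and your sketch follows exactly the route those sources take --- the special isogeny $\pi\colon G\to G'$ in characteristic $e(G)$ and the resulting refinement of the tensor product theorem. Your reduction of (3) to (2) is also correct: for $p=2$ and $G=B_n,C_n$ the length partition of the simple roots isolates $\alpha_n$, so one of $\lambda_1,\lambda_2$ vanishes precisely when $a_n=0$ (i.e.\ $\omega\in\Omega_0^+(G)$) or $\omega=\omega_n$. Two caveats if this were meant to stand as an actual proof rather than a gloss on the citation: the irreducibility of $V_{\lambda_1}\otimes\pi^*V_{\lambda_2'}$ cannot be obtained by ``weight-matching'' --- comparing highest weights only identifies the head of the tensor product, and the irreducibility is the real substance of Steinberg's theorem, proved by a Clifford-type argument over the infinitesimal kernel of $\pi$ (the pullback factor is trivial on $\ker\pi$ while $V_{\lambda_1}$ stays irreducible there); likewise, in (1) no ``character or Weyl-module dimension comparison'' delivers indecomposability for $p\neq e(G)$ --- that is precisely the content of Seitz's Proposition 1.6. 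Since you explicitly defer these technical cores to the same references the paper cites, the proposal is consistent with the paper's treatment.
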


\begin{remar}\label{rfr}
If $\om$ is not $p$-restricted then
$V_\om$ is tensor-decomposable unless the expansion $\om=\lam_0+p\lam_1+\cdots +p^k\lam _k$
consists of a single term, say, $p^i\lam_i$. In that case,
$V_{p^i\lam_i}\cong V_{\lam_i}^{\text{Fr}^i}$ is $i$th Frobenius twist of
$V_{\lam_i}$, and thus it is tensor-indecomposable if and only if   $V_{\lam_i}$ is so.
\end{remar}

The following is well known.

\begin{lemma}\label{fr} Let $\om\in\Omega^+(G)$ and let $i\in \mathbb{N}$. Then
the system of weights of the $G$-module $V_{p^i\om}$ is exactly
$\{p^i\lam\mid \lam \text{ weight of } V_{\om}\}$. In particular,
$0$ is a weight of $V_{\om}$ if and only if  $0$ is a weight of $V_{p^i\om}$.
\end{lemma}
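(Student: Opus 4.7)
The plan is to derive the weight system of $V_{p^i\om}$ from that of $V_\om$ via the Frobenius-twist identification, exploiting the fact that Frobenius scales characters of a maximal torus by $p$.

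First, I would fix a split maximal torus $T\subset G$ and recall that the Frobenius endomorphism $F\colon G\to G$ acts on the character lattice $X(T)$ as multiplication by $p$. In additive notation this says $\mu\circ F=p\mu$ for every $\mu\in X(T)$, and it is immediate from the fact that $F$ raises torus coordinates to the $p$-th power.

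Second, I would check that $V_{p^i\om}\cong V_\om^{\text{Fr}^i}$ for all $\om\in\Omega^+(G)$, not only for the $p$-restricted case recorded in Remark \ref{rfr}. For $p$-restricted $\om$ this is exactly the content of that remark. For general $\om$ one takes the $p$-adic expansion $\om=\lam_0+p\lam_1+\cdots+p^k\lam_k$, so that $p^i\om=p^i\lam_0+p^{i+1}\lam_1+\cdots+p^{i+k}\lam_k$, and applies Steinberg's tensor product theorem to both sides: the resulting tensor factorisations agree after applying $\text{Fr}^i$ factorwise, giving $V_{p^i\om}\cong V_\om^{\text{Fr}^i}$. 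With this identification in hand, if $V_\om=\bigoplus_\mu (V_\om)_\mu$ is the weight-space decomposition relative to $T$, then for $v\in (V_\om)_\mu$ and $t\in T$ the twisted action satisfies $t\cdot v=F^i(t)v=\mu(F^i(t))v=(p^i\mu)(t)v$. Hence $(V_\om)_\mu$ becomes the $(p^i\mu)$-weight space of $V_{p^i\om}$, which yields the asserted equality of weight sets (with multiplicities preserved).

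The ``in particular'' statement then follows at once from $p^i\cdot 0=0$. I expect no real obstacle: the whole lemma is a one-line consequence of how Frobenius acts on characters of a maximal torus. The only point requiring a moment of care is the extension of the Frobenius-twist identification from $p$-restricted weights to arbitrary dominant weights, but this is routine via Steinberg's theorem.
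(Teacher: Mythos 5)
Your proof is correct and is the standard argument; the paper itself gives no proof of this lemma, simply labelling it as well known. The Frobenius-twist identification $V_{p^i\om}\cong V_\om^{\mathrm{Fr}^i}$ (extended from the $p$-restricted case to arbitrary dominant weights via Steinberg's tensor product theorem, exactly as you do) together with the fact that $F$ acts on $X(T)$ as multiplication by $p$ is precisely the reasoning the authors implicitly rely on.
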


Let $V$ be a tensor-indecomposable irreducible $G$-module
with $p$-restricted highest weight $\om$.
We wish to find conditions for the module $V$ to have weight 0.
A well known obvious necessary condition is that $\om$ is a radical weight.
Premet's Theorem \cite{Pr} shows that this condition is also sufficient if
$p=0$ or $p>e(G)$.

\begin{theo}\label{prem} \cite[Theorem 1]{Pr}  Assume $p=0$ or $p>e(G)$. Let $V_\om$ be an irreducible $G$-module
with $p$-restricted dominant weight $\om$. Then the set of dominant weights in
 $V_\om$ is $\{\mu\preceq \om \mid \mu\in\Omega^+(G) \}$, and the set of  weights in
 $V_\om$ is $\{w\mu \mid  \mu\in \Omega^+(G),\ \mu\preceq\om,\ w\in W\}.  $ Consequently, if $\om$ is radical then $V_\om$
 has weight $0$, otherwise $V_\om$ has a unique miniscule fundamental weight $\om_j$. \end{theo}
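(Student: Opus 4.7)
The theorem splits into three claims: (a) the dominant weights of $V_\om$ are exactly $\{\mu\in\Omega^+(G):\mu\preceq\om\}$; (b) every weight is $W$-conjugate to such a dominant weight; (c) the stated consequence about radical and miniscule weights. Claim (b) is immediate from $W$-invariance of the weight system once (a) is established, and (c) is a formal consequence of (a), so the substantive task is to prove (a).

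For (a) I would compare with characteristic zero through the Weyl module $V(\om)$, whose formal character is $p$-independent. In characteristic $0$, Freudenthal's or Kostant's multiplicity formula shows that every dominant $\mu\preceq\om$ has positive multiplicity in $V_\om$, so each such $\mu$ is automatically a weight of $V(\om)$ in every characteristic. The remaining task is to show $\mu$ survives in the simple quotient $L(\om)=V(\om)/\Rad V(\om)$. I would attempt this by producing, for each dominant $\mu\preceq\om$, an explicit weight vector in the Kostant $\ZZ$-form whose image in $L(\om)$ is nonzero after reduction modulo $p$; the hypothesis $p>e(G)$ would enter through denominators appearing in divided-power commutation relations among nilpotent root elements of short and long types. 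An alternative route uses the Jantzen sum formula to bound the composition factors of $\Rad V(\om)$ and then inducts on $\preceq$, showing that none of those factors can absorb the whole $\mu$-weight space.

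This control of $\Rad V(\om)$ is the main obstacle. The inequality $p>e(G)$ is tight: for smaller primes in types $B$, $C$, $F_4$, $G_2$, dominant weights $\mu\preceq\om$ can genuinely be lost in passing from $V(\om)$ to $L(\om)$, so any argument must use the prime bound essentially rather than merely cosmetically.

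For (c), all weights of $V_\om$ lie in the single coset $\om+\Omega_r$ of the root lattice. If $\om$ is radical, then since $\om$ is dominant and the entries $d_{ij}$ of the inverse Cartan matrix are non-negative, $\om$ is a non-negative integer combination of simple roots; thus $0\preceq\om$ is a dominant weight and therefore a weight of $V_\om$ by (a). If $\om$ is not radical, then by the classification of cosets of $\Omega_r$ in $\Omega$ (Bourbaki, Ch.~VIII, \S 7) the coset $\om+\Omega_r$ contains a unique miniscule fundamental weight $\om_j$, and the standard fact that $\om_j$ is the minimal dominant element of its coset under $\preceq$ forces $\om_j\preceq\om$, so $\om_j$ is a weight of $V_\om$ by (a), giving the required uniqueness and existence.
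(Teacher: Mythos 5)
This statement is not proved in the paper at all: it is Premet's theorem, quoted verbatim with the citation \cite[Theorem 1]{Pr}, and the final ``consequently'' clause is attributed to \cite[Proposition 2.3]{SZ06}. Your reduction of the problem to claim (a) is sound, and your derivation of (c) from (a) is correct and is essentially the argument in \cite{SZ06}: a dominant radical weight is a non-negative integral combination of simple roots (positivity of the inverse Cartan matrix), so $0\preceq\om$; and a non-radical coset $\om+\Omega_r$ contains a unique miniscule fundamental weight, which is the $\preceq$-minimal dominant element of that coset, hence $\preceq\om$. Claim (b) from (a) via $W$-invariance is likewise fine.

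The genuine gap is claim (a) itself. What you have written for it is a research plan, not a proof: you name two possible strategies (exhibiting non-vanishing vectors in the Kostant $\ZZ$-form, or controlling the composition factors of $\Rad V(\om)$ via the Jantzen sum formula) but carry out neither, and you yourself flag the control of $\Rad V(\om)$ as ``the main obstacle.'' That obstacle is precisely the content of Premet's theorem; its proof is a substantial piece of work (Premet's argument proceeds by a careful induction using the subgroup structure of $G$ and properties of primitive vectors, not by a direct radical computation), and neither of your proposed routes is known to yield it in the generality stated. Since the theorem is an external input here, the correct move is to cite \cite{Pr} for (a) and \cite{SZ06} for (c), rather than to attempt a self-contained proof; as an attempted self-contained proof, the argument for (a) is incomplete at its central step.
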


The additional claim is stated explicitly (with a proof) in \cite[Proposition 2.3]{SZ06}.
We wish to characterize the occurrence of weight 0 in the exceptional cases as well.
This was essentially done in \cite{z09},
except the case of $B_n$ for $p=2$. The latter can be treated similarly to the case of
$C_n$ as the corresponding root systems are dual to each other.
Indeed, by \cite[Theorem 28]{St} (see also \cite[Section 5.4]{Hum}), for $p=2$
there are homomorphisms ({\em special isogenies}) of algebraic groups
$\varphi_{BC}:B_n\to C_n$
and
$\varphi_{CB}:C_n\to B_n$
which are isomorphisms of the abstract groups (as the ground field is algebraically closed), but not isomorphisms of algebraic groups  \cite[p.155 Remark(b)]{St}.
More exactly, suppose we have
a pair of simple, simply connected algebraic groups $G$ and
$G'$ with dual root systems $R$ and $R^\vee$. Let $p=2,3$ and let
$\varphi:G\to G'$ be the associated special isogeny. When $\varphi$
is restricted to a map of maximal tori $T\to T'$, the comorphism
$\varphi^*$ sends each short root of $G'$ to the corresponding
long root of $G$ and each long root of $G'$ to the $p$th multiple of
the corresponding short root of $G$.
The corresponding fundamental weights are mapped similarly:
$\varphi^*(\om_i^{G'})=\om_i^{G}$ if the root $\alpha_i$ of $G'$ is short
and $\varphi^*(\om_i^{G'})=p\om_i^{G}$ if the root $\alpha_i$ of $G'$ is long.
In addition, the composition of $\varphi$ with an irreducible representation $V$ of
$G'$ is an irreducible representation of $G$, denoted $\varphi^*(V)$.
Note that the module $V$ has weight zero if and only if
$\varphi^*(V)$ has it,
as the zero weight space corresponds to a trivial representation
of a maximal torus of the group.
Thus, we have the following theorem.

\begin{theo}\label{BC}\cite[Theorem 28]{St}, \cite[Section 5.4]{Hum}
Let $\varphi:G\to G'$, $\varphi_{BC}:B_n\to C_n$ and
$\varphi_{CB}:C_n\to B_n$ be special isogenies as above.
\begin{itemize}
\item[$(1)$] If $V$ is an irreducible $G'$-module of highest weight $\lambda$
then $\varphi^*(V)$  is an irreducible $G$-module
of highest weight $\varphi^*(\lambda)$.
Moreover, $V$ has weight zero if and only if
$\varphi^*(V)$ has it.
\item[$(2)$] $\varphi_{CB}^*(\sum_{i=1}^n a_i\om_i^B)=\sum_{i=1}^{n-1}2a_i\om_i^C+a_n\om_n^C$
for all integers $a_1,\dots,a_n$.
\item[$(3)$] $\varphi_{BC}^*(\sum_{i=1}^n a_i\om_i^C)=\sum_{i=1}^{n-1}a_i\om_i^B+2a_n\om_n^B$
for all integers $a_1,\dots,a_n$.
\end{itemize}
\end{theo}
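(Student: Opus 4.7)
My plan is to derive Theorem~\ref{BC} directly from Steinberg's Theorem 28 together with the Bourbaki expressions for the fundamental weights in terms of the simple roots.

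Part (1) is essentially a translation of the existence statement for the special isogeny. The map $\varphi:G\to G'$ as a homomorphism of algebraic groups, and the action of its comorphism on the root system, are supplied by \cite[Theorem 28]{St} (see also \cite[Section 5.4]{Hum}). The assertion that $\varphi^*(V)$ is irreducible with highest weight $\varphi^*(\lam)$ follows because $\varphi$ is bijective as a map of abstract groups and carries a fixed maximal torus $T\subset G$ onto a fixed maximal torus $T'\subset G'$: the weight decomposition of $V$ pulls back through this torus map to a weight decomposition of $\varphi^*(V)$, and a highest-weight vector of $V$ remains a highest-weight vector of $\varphi^*(V)$ with respect to a Borel subgroup of $G$ containing $T$. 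For the weight-zero statement, the zero weight space of $V$ is precisely its $T'$-fixed subspace; because $\varphi(T)=T'$, this coincides with the $T$-fixed subspace of $\varphi^*(V)$, which is its zero weight space. Hence $V$ has a nonzero zero weight space if and only if $\varphi^*(V)$ does.

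For parts (2) and (3) I would substitute the Bourbaki formulas into the comorphism rule ``short $\to$ corresponding long, long $\to p\cdot(\mbox{corresponding short})$'' with $p=2$. In $B_n$ only $\al_n^B$ is short, while in $C_n$ the short simple roots are $\al_1^C,\dots,\al_{n-1}^C$ and $\al_n^C$ is long; this gives
$$\varphi_{CB}^*(\al_i^B)=2\al_i^C\ (i<n),\ \ \varphi_{CB}^*(\al_n^B)=\al_n^C,\ \ \varphi_{BC}^*(\al_i^C)=\al_i^B\ (i<n),\ \ \varphi_{BC}^*(\al_n^C)=2\al_n^B.$$
Recalling from \cite[Planches II and III]{Bo} that $\om_k^B=\sum_{j<k}j\al_j^B+k\sum_{j=k}^n\al_j^B$ for $k<n$, $\om_n^B=\tfrac12\sum_{i=1}^n i\al_i^B$, and $\om_k^C=\sum_{j<k}j\al_j^C+k\sum_{j=k}^{n-1}\al_j^C+\tfrac{k}{2}\al_n^C$ for $k\le n$, a direct substitution yields $\varphi_{CB}^*(\om_k^B)=2\om_k^C$ and $\varphi_{BC}^*(\om_k^C)=\om_k^B$ for $k<n$, together with $\varphi_{CB}^*(\om_n^B)=\om_n^C$ and $\varphi_{BC}^*(\om_n^C)=2\om_n^B$. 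Extending by linearity one obtains the formulas claimed in (2) and (3).

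I do not anticipate a conceptual obstacle. The only mild book-keeping is tracking how the factor of $2$ produced by the ``long $\to 2\cdot$short'' rule interacts with the half-integer coefficients of $\al_n$ appearing in the spin-type weights $\om_n^B$ and $\om_k^C$; these cancellations are exactly what is needed so that each $\varphi^*(\om_k)$ lands in the integral weight lattice of the target, which is precisely the content of parts (2) and (3).
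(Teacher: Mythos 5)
Your proposal is correct and follows essentially the same route as the paper, which treats this theorem as a citation of \cite[Theorem 28]{St} and \cite[Section 5.4]{Hum} and justifies it in the preceding discussion: the comorphism's action on roots and fundamental weights, plus the observation that the zero weight space is the fixed space of the maximal torus. The only difference is that you explicitly derive the images $\varphi^*(\om_k)$ from the images of the simple roots via the Planche formulas (the computations check out, including the cancellation of the half-integer coefficients of $\al_n$), whereas the paper simply asserts the rule for fundamental weights alongside the rule for roots.
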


Now we can
characterize the occurrence of weight 0 in the exceptional cases as well.

\begin{propo}\label{zz9}  Let $G$ be a simple algebraic group
and let $V$ be a tensor-indecomposable
irreducible $G$-module with  $p$-restricted highest weight $\om$.
Then
$V$ has weight $0$ if and only if $\om$ is radical and none of the following holds:
\begin{itemize}
\item[$(1)$] $p = 2$, $G=C_n$, $n$ even and $\om=\om_n$;
\item[$(2)$] $p = 2$, $G=B_n$, $\om\in \Omega_0^+(G)$ and $\om^C$ is not a radical weight of $C_n;$
\item[$(3)$] $p = 2$, $G=G_2$ and $\om=\om_1.$
\end{itemize}
\end{propo}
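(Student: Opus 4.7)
The plan is a three-way case split depending on the size of $p$ relative to $e(G)$. First, Premet's Theorem \ref{prem} immediately settles the range $p > e(G)$ (and $p = 0$): there, $V_\omega$ has weight $0$ iff $\omega$ is radical, and since each exception (1)--(3) forces $p = 2 \leq e(G)$, there is nothing more to verify for these groups.

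Next I would restrict to $p \leq e(G)$, leaving $p = 2$ for $G \in \{B_n, C_n, F_4, G_2\}$ and $p = 3$ for $G = G_2$. By Lemma \ref{pp1}(2)--(3) the tensor-indecomposability of $V_\omega$ constrains the support of the $p$-restricted weight $\omega$ to lie entirely among short simple roots or entirely among long simple roots; for $B_n$ or $C_n$ at $p=2$ this sharpens to $\omega = \omega_n$ or $\omega \in \Omega_0^+(G)$. For $G \in \{C_n, F_4, G_2\}$ the corresponding classification of tensor-indecomposable irreducibles possessing weight $0$ is carried out in \cite{z09}; that analysis produces exactly exceptions (1) and (3) and nothing else (in particular no exception arises for $F_4$ or for $G_2$ at $p = 3$).

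The only case not covered directly by \cite{z09} is $G = B_n$ with $p = 2$, which I would handle via the special isogeny $\varphi_{CB} : C_n \to B_n$ of Theorem \ref{BC}. If $\omega = \omega_n^B$ then $\omega$ is not radical (its $\alpha_1$-coefficient is $1/2$), so both sides of the desired equivalence fail and there is nothing to prove. Otherwise $\omega \in \Omega_0^+(B_n)$; Theorem \ref{BC}(2) gives $\varphi_{CB}^*(\omega) = 2\omega^C$, and combining Theorem \ref{BC}(1) with Lemma \ref{fr} (applied to $2\omega^C = 2 \cdot \omega^C$ in characteristic $p = 2$) shows that $V_\omega$ contains the zero weight iff $V_{\omega^C}$ does. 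Since $\omega^C \in \Omega_0^+(C_n) \setminus \{\omega_n^C\}$ and is therefore a $C_n$-module already handled in the previous step, this is equivalent to $\omega^C$ being radical in $C_n$. Noting that each $\omega_i^B$ with $i < n$ has integer coordinates in the simple-root basis, $\omega \in \Omega_0^+(B_n)$ is automatically radical in $B_n$, and one recovers exception (2) verbatim.

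The main obstacle is the $B_n$-to-$C_n$ transfer: one must verify that after pullback via $\varphi_{CB}^*$ the weight $2\omega^C$ still falls in the $C_n$ range already resolved, and that the radicality conditions on the two sides are correctly aligned---an asymmetry that is precisely what forces exception (2) to be phrased in terms of $\omega^C$ rather than $\omega$. Once Theorem \ref{BC} is invoked, the rest is straightforward bookkeeping rather than new argument.
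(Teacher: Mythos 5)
Your proposal is correct and follows essentially the same route as the paper: defer to \cite{z09} for every case except $G=B_n$ at $p=2$, then transfer that case to $C_n$ via the special isogeny of Theorem \ref{BC}. The only (cosmetic) difference is that you pull back along $\varphi_{CB}$, landing on highest weight $2\om^C$ and invoking Lemma \ref{fr} to strip the Frobenius twist, whereas the paper writes $V_\om=\varphi_{BC}^*(V_{\om^C})$ directly via Theorem \ref{BC}(3); both reduce to the same already-settled $C_n$ statement.
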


\begin{proof} This was proved in \cite[Corollary 11]{z09} except for the case $G=B_n$ for $p = 2$.
Note that in \cite[Corollary 11]{z09} the case with $p = 2$ and $G=C_n$, $n$ even, $\om=\om_n$  is missing, but it is clear from the context or from Corollary 14 in \cite{z09} that this has to be included.

Suppose now that $G=B_n$ and $p = 2$. Since  $V_\om$
is tensor-indecomposable, by Lemma \ref{pp1}, $\om=\om_n$ or $\om\in \Omega_0^+(G)$.
Note that $\om_n$ is not radical for $B_n$ (see Table \ref{tr}), so
$V_{\om_n}$ cannot have weight 0. Thus, we can assume
$\om\in \Omega_0^+(G)$.
Hence $\om=\varphi_{BC}^*(\om^C)$ by Theorem \ref{BC}(3).
By Theorem \ref{BC}(1), $V_\om$ has  weight $0$ if and only if  $V_{\om^C}$ has it.
Since $\om^C\ne\om_n^C$, this is equivalent to $\om^C$ being radical.
 \end{proof}

We will also need the following fact.

\begin{lemma}\label{zz9b}
 \cite[Proposition 13]{z09}
 Let $p = 2$, $G=C_n$ and let $\om$ be a $2$-restricted
dominant weight of $G$ with $\om\in\Omega_0^+(G)$.
Let $\mu$ be a dominant weight of $G$ such that $\mu\preceq\om$. Then $\mu$ is a weight of
$V_\om$.
 \end{lemma}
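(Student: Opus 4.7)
The plan is to adapt Premet's argument from Theorem \ref{prem} to the characteristic $2$ setting for $C_n$. Premet's theorem already gives the conclusion whenever $p>e(G)=2$, so the task is to identify why the hypothesis $\om\in\Omega_0^+(G)$ (combined with $2$-restrictedness) is precisely what rescues the argument when $p=2$.

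As in Premet's approach, the crucial input is the standard poset lemma on dominant weights in the root order: if $\mu\prec\om$ is dominant with $\om-\mu$ lying in the non-negative integer cone spanned by the simple roots, then there exists a simple root $\alpha_j$ such that $\mu+\alpha_j$ is again dominant and $\mu+\alpha_j\preceq\om$. Iterating this reduces the lemma, by induction on the height of $\om-\mu$, to the following single step: if $\mu$ and $\mu+\alpha_j$ are both dominant with $\mu+\alpha_j\preceq\om$, and $\mu+\alpha_j$ is a weight of $V_\om$, then so is $\mu$.

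The inductive step is exactly where the characteristic $2$ obstruction lives. Starting from a nonzero weight vector of weight $\mu+\alpha_j$, one uses the $\mathfrak{sl}_2$-action along $\alpha_j$ together with the divided powers $f_{\alpha_j}^{(k)}$ to descend to weight $\mu$; the potential failure is that this descent vanishes due to $2$-divisibility of binomial coefficients governed by $\langle\mu+\alpha_j,\alpha_j^\vee\rangle$ and by Chevalley commutator relations. For $G=C_n$ in characteristic $2$, the only $p$-restricted dominant highest weight at which the conclusion of Premet's theorem genuinely fails is $\om_n$ (see Proposition \ref{zz9}(1)), and the hypothesis $\om\in\Omega_0^+(G)$, i.e.\ $a_n=0$, is exactly the condition that keeps $\om$, and thereby all dominant $\mu\preceq\om$, away from this obstructed regime.

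The main obstacle is to make the descent argument rigorous by a careful case analysis of the Chevalley commutator relations and divided-power identities, treating the short simple roots $\alpha_1,\dots,\alpha_{n-1}$ (which form a $D_n$-subsystem where characteristic-$2$ obstructions behave benignly) and the long simple root $\alpha_n$ (where the hypothesis $a_n=0$ is used) on an equal footing. A cleaner alternative is to apply Jantzen's sum formula to the Weyl module $W_\om$ and verify, using $a_n=0$, that no dominant weight $\mu\preceq\om$ occurs in the maximal submodule quotiented out to form $V_\om$; either route ultimately localises the entire problem in the one genuinely exceptional weight $\om_n$, which is precisely what the hypothesis excludes.
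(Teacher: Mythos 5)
The paper does not prove this lemma at all: it is imported verbatim as \cite[Proposition 13]{z09}, so there is no in-paper argument to compare yours against. Judged on its own terms, your proposal is a plan rather than a proof, and it has two concrete gaps. The first is that the ``standard poset lemma'' you invoke is false as stated: it is not true that a dominant $\mu\prec\om$ can always be raised by a \emph{simple} root while remaining dominant. Already in type $A_2$, with $\om=\om_1+\om_2=\al_1+\al_2$ and $\mu=0$, neither $\mu+\al_1=2\om_1-\om_2$ nor $\mu+\al_2=-\om_1+2\om_2$ is dominant. The correct statement (Stembridge \cite{Ste}) is that covers in the dominance order on dominant weights are differences by \emph{positive} roots, so your induction must perform the $\mathfrak{sl}_2$-descent along arbitrary positive roots; in $C_n$ both root lengths occur among the positive roots even when $a_n=0$, so the long-root difficulty cannot be quarantined in the single simple root $\al_n$ as your sketch suggests.

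The second and more serious gap is that the step carrying all the content of the lemma is asserted rather than proved. You write that for $C_n$, $p=2$, the only $2$-restricted highest weight where Premet's conclusion genuinely fails is $\om_n$, and that $a_n=0$ is ``exactly the condition'' avoiding the obstruction --- but that assertion essentially \emph{is} the lemma, and Proposition \ref{zz9}(1), which you cite in its support, only concerns the occurrence of the single weight $0$ in tensor-indecomposable modules, not the saturation of the whole dominant weight poset of $V_\om$. Your text concedes that ``the main obstacle is to make the descent argument rigorous,'' and the Jantzen-sum-formula alternative is likewise only gestured at. As written the argument is circular at its central point; to close it you would have to actually carry out the characteristic-$2$ string analysis (or the sum-formula computation) for $C_n$ with $a_n=0$, which is precisely the work done in \cite[Proposition 13]{z09}.
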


We complement this with the following  observation.

\begin{propo}\label{z27}
Let $G$ be a simple algebraic group
and let $V$ be a tensor-indecomposable
irreducible $G$-module with  highest weight $\om$.
Then $V$ has a unique minimal dominant weight $\om_{\min}$.
Moreover, if $\om$ is $p$-restricted then
$\om_{\min}$ is the same as in characteristic zero (i.e.
$\om_{\min}$ is $0$ if  $\om$ is radical, or miniscule otherwise)
unless one of the following holds.
\begin{itemize}
\item[$(1)$] $p=2$, $G=C_n$ and $\om=\om_n$. Then $\om_{\min}=\om_n$.
\item[$(2)$] $p=2$, $G=B_n$ and $\om\in \Omega_0^+(G)$.
Then $\om_{\min}=0$ if $\om^C$ is a radical weight of $C_n$ and  $\om_{\min}=\om_1$  otherwise.
\item[$(3)$]  $p=2$, $G=G_2$ and $\om=\om_1$. Then $\om_{\min}=\om_1$.
\end{itemize}
\end{propo}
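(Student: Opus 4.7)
My plan is to reduce to the $p$-restricted case via Frobenius twist, prove a general lattice lemma that settles the ``generic'' sub-cases, and then dispose of the three listed exceptions using the special isogenies of Theorem \ref{BC}. For the reduction, Remark \ref{rfr} writes a tensor-indecomposable $V$ as $V\cong V_{\lam_0}^{\text{Fr}^k}$ for some $p$-restricted dominant $\lam_0$ and $k\ge 0$. By Lemma \ref{fr} the weight multiset of $V$ is $p^k$ times that of $V_{\lam_0}$, and multiplication by $p^k>0$ preserves both dominance and the partial order $\preceq$. So it suffices to establish existence and uniqueness of $\om_{\min}$ for the $p$-restricted module $V_{\lam_0}$, with $\om_{\min}(V)=p^k\om_{\min}(V_{\lam_0})$; hence I may assume $\om$ itself is $p$-restricted.

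The core general fact is a lattice lemma: whenever the dominant weight set $D(\om)$ of $V_\om$ equals $\{\mu\in\Omega^+(G):\mu\preceq\om\}$, the set $D(\om)$ has a unique $\preceq$-minimum, namely $0$ if $\om$ is radical and otherwise the unique miniscule fundamental weight $\om_j$ in the coset $\om+\Omega_r(G)$, where $\Omega_r(G)$ is the root lattice. This rests on the classical fact (\cite[Ch.~VIII, \S7.3]{Bo8}) that each coset of the root lattice in the weight lattice has a unique $\preceq$-minimal dominant weight, namely $0$ for the trivial coset and a miniscule otherwise, together with the observation that every element of $D(\om)$ lies in $\om+\Omega_r(G)$ and hence dominates this minimal representative. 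Theorem \ref{prem} supplies $D(\om)=\{\mu\in\Omega^+(G):\mu\preceq\om\}$ whenever $p>e(G)$, and Lemma \ref{zz9b} does the same for $p=2$, $G=C_n$, $\om\in\Omega_0^+(G)$. For the remaining non-exceptional $p$-restricted cases, $\om_{\min}$ still coincides with the characteristic zero value: if the weight lattice equals the root lattice (as for $F_4$ and $G_2$) then every dominant weight is radical and Proposition \ref{zz9} gives $0$ as a weight outside exception $(3)$; and for $G=B_n$, $p=2$, $\om=\om_n$ (spin), the dominant weights of $V_\om$ are contained in the characteristic zero set $\{\om_n\}$.

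For the three exceptional cases I invoke Theorem \ref{BC}. In case $(1)$, $\varphi_{CB}^*(\om_n^B)=\om_n^C$ gives $V_{\om_n^C}\cong\varphi_{CB}^*(V_{\om_n^B})$; the spin weights $\tfrac{1}{2}\sum\varepsilon_i e_i^B$ pull back to $\sum\varepsilon_i e_i^C$, and only $(+1,\dots,+1)$ yields a $C_n$-dominant image, so $\om_{\min}=\om_n$. In case $(2)$, $\om=\varphi_{BC}^*(\om^C)$ and $V_\om\cong\varphi_{BC}^*(V_{\om^C})$; Lemma \ref{zz9b} combined with the lattice lemma applied in $C_n$ gives $\om_{\min}(V_{\om^C})\in\{0,\om_1^C\}$ according as $\om^C$ is or is not radical in $C_n$, and $\varphi_{BC}^*$ sends these to $0$ or $\om_1^B=\om_1$. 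In case $(3)$, the characteristic zero dominant weight set of $V_{\om_1}$ is $\{\om_1,0\}$; its characteristic two counterpart is a subset, and Proposition \ref{zz9}(3) rules out $0$, so $\om_{\min}=\om_1$.

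The main obstacle is the isogeny bookkeeping in case $(2)$: one must check that $\varphi_{BC}^*$ induces a bijection on dominant weight sets (not merely an injection) and preserves the partial order $\preceq$. Both follow from the explicit formula of Theorem \ref{BC}(3)---a weight $\sum c_i\om_i^C$ is $C_n$-dominant iff its image $\sum_{i<n}c_i\om_i^B+2c_n\om_n^B$ is $B_n$-dominant, and each simple root of $C_n$ is sent to a positive integer multiple of the corresponding simple root of $B_n$---but they do deserve to be spelled out in the full proof.
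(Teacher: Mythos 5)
Your proposal is correct and follows essentially the same route as the paper: reduction to the $p$-restricted case via Lemma \ref{fr}, Premet's theorem together with the classical fact on minimal dominant representatives of root-lattice cosets for the generic case, Lemma \ref{zz9b} for $C_n$ in characteristic $2$, and the special isogenies for $B_n$. The only notable divergence is the uniqueness of $\om_{\min}=\om_1$ in case $(2)$: you transfer the partial order through $\varphi_{BC}^*$ (correctly flagging the compatibility checks this requires), whereas the paper argues directly in $B_n$ that every nonzero radical dominant weight dominates $\om_1$.
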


\begin{proof} Let $V$ be a tensor-indecomposable $G$-module and let
$\om$ be the highest weight of $V$. Since $V$ is tensor-indecomposable, by Steinberg's tensor product theorem, $\om=p^k\om'$ for some $p$-restricted $\om'$.
Now if $V_{\om'}$ has the smallest dominant weight $\om_{\min}$ then
it follows from Lemma  \ref{fr} that $p^k\om_{\min}$ is the smallest dominant weight of $V_\om$ and we are done.
Thus, we can assume that $\om$ is $p$-restricted.
In view of Theorem \ref{prem}, we need only to consider the exceptional cases.

First, let $p=2$ and $G=C_n$. Then by Lemma \ref{pp1}(3),
$\om=\om_n$ or $\om\in\Omega_0^+(G)$.
If $\om=\om_n$ then $\om_n$ is well known to be the unique dominant weight in $V_\om$, so
$\om_{\min}=\om_n$.
If  $\om\in\Omega_0^+(G)$ then
by Lemma \ref{zz9b},
the weights of $V_\om$
are the same as the weights of the corresponding module over the complex numbers, so the result
follows.

Let  $p=2$ and $G=B_n$. Then, by Lemma \ref{pp1}(3),
$\om=\om_n$ or $\om\in\Omega_0^+(G)$.
If $\om=\om_n$ then $V_\om$ is the spin module and $\om_n$ is its unique dominant weight,
so $\om_{\min}=\om_n$ (as in characteristic 0).
Suppose  $\om\in\Omega_0^+(G)$.
Then, by Theorem \ref{BC}, $V_\om=\varphi_{BC}^*(V_{\om^C})$.
By the above, $V_{\om^C}$ has the smallest dominant weight $\om_{\min}'$.
Note that $\om_{\min}'=0$ if $\om^C$ is radical, and  $\om_{\min}'=\om_1^C$  otherwise.
We claim that $\varphi_{BC}^*(\om_{\min}')$ is the smallest weight of
$V_\om$.
Indeed, this is clear if $\om^C$ is radical as $\varphi_{BC}^*(0)=0$.
Suppose $\om^C$ is not radical. Then $V_\om$ has no zero weight but has weight
$\varphi_{BC}^*(\om_1^C)=\om_1$. It is well known that $\om_1$ is minimal.
It remains to show uniqueness.
Let $\mu$  be  any dominant weight of $V_\om$. Then $\mu\ne0$. We wish to show that $\om_1\preceq\mu$.
Since  $\om\in\Omega_0^+(G)$, both $\om$ and $\mu$ are radical weights of $B_n$, see  Table \ref{tr}.
Hence $\mu$ is a combination of $\om_1,\dots,\om_{n-1}$ and $2\om_n$ with non-negative integral coefficients.
Since $\om_1\preceq\om_j$ for $j=1,\dots,n-1$ and $\om_1\preceq 2\om_n$
(see \cite[Planche II]{Bo}), we get that $\om_1\preceq\mu$, as required.

If $G=F_4$ or $G=G_2$  then every weight of  $G$ is radical, so, by Proposition \ref{zz9}, every $G$-module has weight 0, unless  $G=G_2$ and $p=2$. In the latter case $V_{\om_1}$ is the only irreducible  $G$-module that does not have weight 0, and in this case $\om_1$ is the only dominant weight of  $V_{\om_1}$.
So the result follows. \end{proof}

\section{Some observations on weight lattices}
\label{ss}

The aim of this section is to prove Theorem \ref{th1}, i.e.
to establish a criterion for an irreducible representation with highest weight $\om$
of a finite dimensional simple complex Lie algebra (or simple complex algebraic group) $G$
to contain a multiple of a fundamental weight $m\om_j$.
A number of useful facts on the weight lattices can be found in \cite{Bo} and \cite{Ste}, but the following  simple lemma
was not recorded there, and is probably not 
recorded elsewhere.  Note that $(\om_i,\al_j)=\de_{ij}(\al_j,\al_j)/2$ (the Kronecker delta), see \cite[Ch.VI, \S 1.10 ]{Bo}.

\begin{lemma}\label{r1} Let $\mu,\nu\in\Omega(G)$. Then $\mu\succeq \nu$ if and only if  $\mu-\nu$ is radical and $(\mu,\om_k)\geq (\nu,\om_k)$ for $k=1,\ldots , n$.
\end{lemma}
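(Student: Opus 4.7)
The plan is to exploit the dual-basis relation noted just before the lemma, namely $(\om_i,\al_j)=\de_{ij}(\al_j,\al_j)/2$, which makes the simple roots and fundamental weights essentially orthogonal with respect to the inner product. This lets us read off the coefficient of $\al_k$ in any radical expression from a single inner product with $\om_k$.

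For the forward direction I would write $\mu-\nu=\sum_{i=1}^n c_i\al_i$ with $c_i\in\ZZ_{\geq 0}$, which is immediate from the definition of $\succeq$; this already shows $\mu-\nu$ is radical. Then I would pair with $\om_k$ and use the orthogonality relation to get
\[
(\mu-\nu,\om_k)=\sum_{i=1}^n c_i(\al_i,\om_k)=c_k(\al_k,\al_k)/2\geq 0,
\]
which gives $(\mu,\om_k)\geq(\nu,\om_k)$.

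For the reverse direction, assume $\mu-\nu=\sum_{i=1}^n c_i\al_i$ with $c_i\in\ZZ$ (this is what it means for $\mu-\nu$ to be radical), and that $(\mu-\nu,\om_k)\geq 0$ for every $k$. The same computation as above yields $c_k(\al_k,\al_k)/2\geq 0$, and since $(\al_k,\al_k)>0$ we conclude $c_k\geq 0$ for all $k$, so $\mu\succeq\nu$.

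There is no real obstacle here: once one recalls the orthogonality identity $(\om_i,\al_j)=\de_{ij}(\al_j,\al_j)/2$, both implications reduce to the same one-line computation. The only point worth emphasizing in the write-up is that radicality ensures the integrality of the coefficients $c_i$, while the inner product inequalities upgrade them to non-negativity.
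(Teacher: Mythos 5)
Your proof is correct and follows essentially the same route as the paper's: both directions reduce to pairing $\mu-\nu=\sum c_i\al_i$ with each $\om_k$ and using the orthogonality relation $(\om_i,\al_j)=\de_{ij}(\al_j,\al_j)/2$ to isolate $c_k$. The only cosmetic difference is that the paper's forward direction merely invokes $(\al_j,\om_k)\geq 0$ rather than the exact delta identity, which changes nothing.
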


\begin{proof} The ``only if'' part. Let  $\mu\succeq \nu$. By definition of $\succ$, we have   $\mu= \nu +\sum b_j\al_j$, where $b_j\geq 0$ are integers. So
 $(\mu,\om_k)= (\nu,\om_k)+\sum b_j (\al_j,\om_k)$, whence $(\mu,\om_k)\geq (\nu,\om_k)$ as $(\al_j,\om_k)\geq 0$ for all $j,k\in\{1,\ldots , n\}$.

Conversely,  $\mu= \nu +\sum b_j\al_j$ with $b_j\in\ZZ$ as  $\mu-\nu$ is radical. So  $(\mu,\om_k)\geq (\nu,\om_k)$
means $(\mu,\om_k)-(\nu,\om_k)\geq 0$, whence  $\sum b_j(\al_j,\om_k)\geq 0$ for all $k$. As $(\al_j,\om_k)=0$ for $k\neq j$
and $(\al_k,\om_k)>0$ for all $k$, we conclude that $b_j\geq 0$ for all $j$.
\end{proof}

Set $E(G)=(e_{ij})$, where $e_{ij}=(\om_i,\om_j)$ for $1\leq i,j\leq n$.
Let $\om=\sum a_i\om_i$.
The expressions $\om_i=\sum _k d_{ik}\al_k$ of the fundamental weights in terms of the simple roots are available in \cite[Planche I -- Planche  IX]{Bo}. 
By substituting these into $\om=\sum a_i\om_i$ we get
 $\om=\sum_k c_k\al_k$ where
 $c_k=\sum_i d_{ik}a_i$.
As $(\om_j,\al_k)=\frac{(\al_k,\al_k)}{2}\de_{jk}$  \cite[Ch.VI, \S 1.10 ]{Bo},
we have $$(\om,\om_k)=\sum_i c_i (\al_i,\om_k)=\sum_i c_i (\om_k,\al_i)=\frac{1}{2}(\al_k,\al_k)c_k,$$ whence $c_k=\frac{2}{(\al_k,\al_k)}(\om,\om_k)$. As $\om_j=\sum _k d_{jk}\al_k$, we have
\begin{equation}\label{fd}
d_{jk}=\frac{2}{(\al_k,\al_k)}(\om_j,\om_k)=\frac{2}{(\al_k,\al_k)}e_{jk}
\quad \text{ for }\quad 1\leq j,k\leq n.
\end{equation}
Inspection of the root systems implies that $\frac{1}{2}(\al_k,\al_k)=1$ (and hence $d_{jk}=e_{jk}$) unless one of the following  holds:

$G=B_n$, $k=n$ and $\frac{1}{2}(\al_n,\al_n)=1/2;$

$G=C_n$, $k=n$ and $\frac{1}{2}(\al_n,\al_n)=2;$

$G=F_4$, $k=3,4$ and $\frac{1}{2}(\al_k,\al_k)=1/2;$

$G=G_2$, $k=2$ and $\frac{1}{2}(\al_2,\al_2)=3$.

\noindent From these and the formulas for $\om_i=\sum _k d_{ik}\al_k$ in \cite{Bo} one can easily write down
the matrix $E(G)$.

The matrix $E(G)$ is symmetric and satisfies the following  condition
which is very useful in what follows.

\begin{propo}\label{an5} We have
\begin{equation}\label{1om}
(\om_i,\om_j)(\om_k,\om_k)\geq (\om_i,\om_k)(\om_k,\om_j)~ for~ all ~1\leq i,j,k\leq n.
\end{equation}
\noindent In other words, the matrix $E(G)$ satisfies the property
 $e_{ij}e_{kk}\geq e_{ik}e_{kj}$ for all integers  $i,j,k$ with $1\leq i,j,k\leq n$.
 Moreover, the matrix $\Delta(G)=(d_{ij})$ satisfies the property
 $d_{ij}d_{kk}\geq d_{ik}d_{kj}$ for all $1\leq i,j,k\leq n$.\end{propo}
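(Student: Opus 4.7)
The plan is to reduce the inequality to a Schur-complement computation: fix $k$ and consider the vectors
$$v_i := \om_i - \frac{d_{ik}}{d_{kk}}\om_k, \qquad i = 1,\ldots,n.$$
Expanding $\om_\ell = \sum_j d_{\ell j}\al_j$ shows that the coefficient of $\al_k$ in $v_i$ is $d_{ik} - (d_{ik}/d_{kk})d_{kk} = 0$, so $v_i$ lies in $V := \mathrm{span}\{\al_\ell : \ell \neq k\}$. Combining this with the standard duality $(\om_\ell,\al_j) = \de_{\ell j}(\al_j,\al_j)/2$ yields $(v_i,\al_j) = \de_{ij}(\al_j,\al_j)/2$ for all $j \neq k$. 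Hence for $i \ne k$ the vector $v_i$ is precisely the $i$-th fundamental weight of the sub-root system $R' := \{\pm\al_\ell : \ell \neq k\}$ sitting inside $V$ with the induced inner product, while $v_k = 0$.

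A direct expansion of $(v_i,v_j)$, using the identity $d_{ik}/d_{kk} = e_{ik}/e_{kk}$ that follows from (\ref{fd}), yields
$$(v_i, v_j) = e_{ij} - \frac{e_{ik}e_{jk}}{e_{kk}} = \frac{e_{ij}e_{kk} - e_{ik}e_{kj}}{e_{kk}}.$$
Since $e_{kk} > 0$, the desired inequality (\ref{1om}) is equivalent to $(v_i,v_j) \geq 0$. For $i=k$ or $j=k$ this is an equality, since $v_k = 0$. For $i,j \neq k$, the vectors $v_i$ and $v_j$ are fundamental weights of the semisimple root system $R'$: any two fundamental weights of a semisimple root system are orthogonal when they lie in different irreducible components (whose spans are mutually orthogonal), and have strictly positive inner product otherwise, the latter being equivalent to positivity of the corresponding entry of the inverse Cartan matrix, which is visible from \cite[Planches I--IX]{Bo}. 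In either case $(v_i,v_j) \geq 0$, giving the first inequality. The inequality for $\Delta(G)$ follows from the identity
$$d_{ij}d_{kk} - d_{ik}d_{kj} = \frac{4}{(\al_j,\al_j)(\al_k,\al_k)}\bigl(e_{ij}e_{kk} - e_{ik}e_{kj}\bigr),$$
which is immediate from (\ref{fd}).

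The main obstacle I anticipate is the identification of $v_i$ with the $i$-th fundamental weight of $R'$: one must confirm that $V$ is indeed the ambient space for $R'$ and that the restricted inner product is the intrinsic one. Once this is in place, the whole argument reduces to the standard non-negativity of the entries of the inverse Cartan matrices of simple root systems. A fully self-contained alternative is a case-by-case check of the $2\times 2$ minors in the explicit matrices $\Delta(G)$ of \cite[Planches I--IX]{Bo}; this is routine but considerably longer, and it loses the geometric meaning of the statement.
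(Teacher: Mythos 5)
Your proof is correct, but it takes a genuinely different route from the paper. The paper verifies (\ref{1om}) case by case: for $A_n$ via the closed formula $e_{ij}=i(n+1-j)/(n+1)$, for $B_n,C_n,D_n$ by computing $(\om_i,\om_j)$ in the orthonormal $\ep$-basis of \cite[Planche II--IX]{Bo}, and for the exceptional types by direct inspection of $E(G)$; the passage from $E(G)$ to $\Delta(G)$ is handled, as in your last display, by noting the inequality is stable under multiplication by diagonal matrices. Your argument is instead uniform: you recognize $e_{ij}-e_{ik}e_{kj}/e_{kk}$ (up to the positive factor $e_{kk}$) as the Gram entry $(v_i,v_j)$ of the Schur complement with respect to $\om_k$, and identify $v_i$ ($i\ne k$) as the fundamental weight, attached to $\al_i$, of the semisimple root subsystem with base $\{\al_\ell:\ell\ne k\}$. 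That identification is sound --- the duality relations $(v_i,\al_j)=\de_{ij}(\al_j,\al_j)/2$ for $j\ne k$ together with $v_i\in V=\mathrm{span}\{\al_\ell:\ell\ne k\}$ determine $v_i$ uniquely, and deleting a node from a Dynkin diagram leaves a Dynkin diagram --- so everything reduces to the standard non-negativity of the entries of the inverse Cartan matrix of a semisimple root system (block-diagonal, with strictly positive entries in each irreducible block). Your approach buys uniformity, a conceptual explanation of a property the authors note they could not locate in the literature, and as a byproduct the exact equality cases: equality holds in (\ref{1om}) precisely when $k\in\{i,j\}$ or node $k$ separates nodes $i$ and $j$ in the Dynkin diagram. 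The paper's computation, on the other hand, produces the explicit values of $e_{ij}$, which it reuses later (Remark \ref{rw} and Table \ref{tc}). The only point worth making explicit in a write-up is the citation or short proof of positivity of inverse Cartan matrix entries for irreducible systems; referring to the Planches suffices.
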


\begin{proof} Note that the result for the matrix $\Delta(G)$ follows from that for the
matrix $E(G)$ as
the condition $e_{ij}e_{kk}\geq e_{ik}e_{kj}$ remains valid under replacing $E(G)$ by $XE(G)Y$, where $X$ and $Y$ are arbitrary
diagonal matrices, and in fact these conditions are equivalent if $X$ and
$Y$ are non-degenerate.

If $k=i$ or $k=j$ then we have the equality in (\ref{1om}). Moreover, the case with $i=j$ where (\ref{1om}) takes shape  $(\om_i,\om_i)(\om_k,\om_k)\geq (\om_i,\om_k)^2$ follows from the Cauchy-Schwarz inequality. So we assume that $i,j,k$ are distinct. In addition, (\ref{1om}) is symmetric with respect to $i,j$, so we can assume $i<j$.

Let $G=A_n$.
Then $e_{ij}=i(n+1-j)/(n+1)$ for $1\leq i\leq j\leq n$. So we have
$$(n+1)^2e_{ij}e_{kk}= i(n+1-j)k(n+1-k) \text{ for } i< j$$
\noindent and
$$(n+1)^2e_{ik}e_{kj}=
\begin{cases}
k(n+1-i)\cdot k(n+1-j)& \text{ if } k<i< j, \cr
i(n+1-k)\cdot  k(n+1-j) & \text{ if } i< k< j, \cr
i(n+1-k)\cdot j(n+1-k) & \text{ if } i< j< k.
\end{cases}$$
\noindent
Clearly, (\ref{1om}) holds in all these cases.

\medskip

For the remaining classical types the entries $e_{ij}=(\om_i,\om_j)$ can be easily computed  if one embeds
the weight lattice into an orthogonal space with orthonormal basis $\ep_1,\ldots , \ep_n$, defined in
\cite[Planche II-Planche IX]{Bo}, where the expressions
of $\om_1,\ldots , \om_n$ in terms of $\ep_1,\ldots , \ep_n$ are written down.
In addition,  (\ref{1om}) remains unchanged if $\om_i$ is replaced by $\om_i'=q_i\om_i$ for any $q_i> 0$.

\medskip
Let $G=B_n$. Then
$\om_i=\ep_1+...+\ep_i$ for $i<n$ and $\om_n=(\ep_1+...+\ep_n)/2$.
Put $\om_i'=\om_i$ for $i<n$ and $\om_n'=2\om_n$. Then
$$
(\om_i',\om_j')=(\ep_1+...+\ep_i,\ep_1+...+\ep_j)=\min\{i,j\}
\quad \text{ for all } 1\le i,j\le n. \\
$$
Hence, as required, for all $i<j$ and for all $k$ we get
$$
(\om_i',\om_j')(\om_k',\om_k')=ik\ge \min\{i,k\}\min\{k,j\}=(\om_i',\om_k')(\om_k',\om_j').
$$

\medskip
The case $G=C_n$ follows from that with $G=B_n$. 
As the matrices $(d_{ij})$ computed for $G=B_n$ and $G=C_n$ are transposes of each other,
the claim follows.

\medskip
Let $G=D_n$. 
Put $\om_i'=\om_i$ for $i<n-1$ and $\om_i'=2\om_i$ for $i=n-1,n$. Then
$$
(\om_i',\om_j')=(\ep_1+...+\ep_i,\ep_1+...+\ep_j)=\min\{i,j\}
\quad \text{ for all } 1\le i,j\le n \\
$$
except $(\om_{n-1}',\om_{n-1}')=n$ and $(\om_{n-1}',\om_{n}')=n-2$.
Therefore, as in the case $B_n$,  (\ref{1om}) holds for all $i,j,k$ such that
$n-1\not\in \{i,j,k\}$ or $n\not\in \{i,j,k\}$.
Suppose $\{n-1,n\}\subseteq \{i,j,k\}$. Then, as required, we have
\begin{align*}
(\om_i',\om_j')(\om_k',\om_k')&=in> i(n-2)=(\om_i',\om_k')(\om_k',\om_j') \quad \text{ if } i\leq n-2<j,k; \\
(\om_i',\om_j')(\om_k',\om_k')&=(n-2)k> kk=(\om_i',\om_k')(\om_k',\om_j') \quad \text{ if } k\leq n-2<i<j.
\end{align*}
For the exceptional groups the result follows by inspection of the matrix $E(G)$.
\end{proof}

Note that the matrix $\Delta(G)$ is the inverse of  the Cartan matrix. Both of them are known to satisfy many combinatorial properties. However, we have been unable to find in the literature anything related to Proposition \ref{an5}.

The following  statement is a special case of Lemma \ref{r1}.

\begin{lemma}\label{cm9} Let $m\geq 0$ be an integer and let $\om\in\Omega(G)$.
Then $\om\succeq m\om_j$ if and only if  $\om-m\om_j$ is a radical weight and
 for every $k=1,\ldots , n$ we have $(\om,\om_k)\geq m(\om_j,\om_k)$.
\end{lemma}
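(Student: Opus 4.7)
The plan is to deduce this directly from Lemma \ref{r1}, since the statement is explicitly flagged as a special case. I would apply Lemma \ref{r1} with $\mu = \om$ and $\nu = m\om_j$. The hypothesis that $\mu - \nu = \om - m\om_j$ be radical translates verbatim, and the inequality $(\mu, \om_k) \geq (\nu, \om_k)$ becomes $(\om, \om_k) \geq (m\om_j, \om_k) = m(\om_j, \om_k)$ by bilinearity of the inner product. No further work is required.

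The only point that might deserve a short verification is the case $m = 0$, where the statement reduces to: $\om \succeq 0$ if and only if $\om$ is radical (the inequality condition $(\om,\om_k) \geq 0$ for all $k$ is then automatically implied by $\om$ being a non-negative integral combination of simple roots, since $(\al_j,\om_k) \geq 0$). This again follows from Lemma \ref{r1} applied with $\nu = 0$, so no separate argument is needed.

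Since there is essentially no obstacle, the ``proof'' is a one-line reduction: invoke Lemma \ref{r1} and note the scaling of the inner product. I would write it in a single sentence referencing Lemma \ref{r1} and the bilinearity of $(\cdot,\cdot)$.
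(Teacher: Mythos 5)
Your proposal is correct and matches the paper exactly: the paper gives no separate proof of Lemma \ref{cm9}, stating only that it is a special case of Lemma \ref{r1}, which is precisely your one-line reduction via $\mu=\om$, $\nu=m\om_j$ and bilinearity. Nothing further is needed.
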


\begin{lemma}\label{r6} Let $m\geq 0$ be an integer and let $\om\in\Omega(G)$. Then  $(\om,\om_k)\geq m(\om_k,\om_k)$ implies $(\om,\om_i)\geq m(\om_k,\om_i)$ for $i=1,\ldots , n$.
\end{lemma}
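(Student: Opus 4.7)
The plan is to reduce the lemma to a weighted sum of instances of Proposition~\ref{an5}. Expand $\om=\sum_{l=1}^n a_l\om_l$ with non-negative integer coefficients $a_l$ (so that $\om$ is dominant, which is the relevant situation in the application to Theorem~\ref{th1}). First I would multiply the desired inequality by the positive scalar $(\om_k,\om_k)$, so that the target becomes
\[
(\om,\om_i)(\om_k,\om_k)\;\geq\; m(\om_k,\om_i)(\om_k,\om_k).
\]
Expanding the left-hand side as $\sum_l a_l(\om_l,\om_i)(\om_k,\om_k)$ and applying Proposition~\ref{an5} termwise, namely $(\om_l,\om_i)(\om_k,\om_k)\geq (\om_l,\om_k)(\om_k,\om_i)$, I can sum these inequalities against the non-negative weights $a_l$ to conclude
\[
(\om,\om_i)(\om_k,\om_k)\;\geq\; \sum_l a_l(\om_l,\om_k)(\om_k,\om_i)\;=\;(\om,\om_k)(\om_k,\om_i).
\]

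To finish, I would combine this inequality with the hypothesis $(\om,\om_k)\geq m(\om_k,\om_k)$ and the elementary fact that $(\om_k,\om_i)\geq 0$ for all $i,k$ (all fundamental weights lie in the closed dominant Weyl chamber, so pairwise inner products are non-negative; alternatively this is immediate from the explicit formulas for $E(G)$ recorded in Section~\ref{ss}). Multiplying the hypothesis by the non-negative quantity $(\om_k,\om_i)$ yields $(\om,\om_k)(\om_k,\om_i)\geq m(\om_k,\om_k)(\om_k,\om_i)$. Chaining this with the previous display and cancelling the positive factor $(\om_k,\om_k)$ gives the desired $(\om,\om_i)\geq m(\om_k,\om_i)$.

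The only substantive input is Proposition~\ref{an5}, which is already in hand; the rest is a short chain of inequalities. The point that requires care is the summation step: Proposition~\ref{an5} is applied weighted by the $a_l$, and the inequality is preserved only because these weights are non-negative. This is precisely why the argument is naturally carried out in terms of the fundamental-weight expansion with dominant $\om$, rather than, say, an expansion in simple roots, where no analogue of Proposition~\ref{an5} is available.
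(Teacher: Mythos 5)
Your proof is correct and is essentially the paper's own argument: the paper expands $\om$ in the simple roots and applies the $\Delta(G)$-form of Proposition~\ref{an5} termwise against the coefficients $a_l$, which is the same computation as yours up to the diagonal rescaling $d_{jk}=\tfrac{2}{(\al_k,\al_k)}e_{jk}$ relating $\Delta(G)$ to $E(G)$. Your explicit restriction to dominant $\om$ is also implicitly present in the paper's proof (the termwise summation step needs $a_l\ge 0$), even though the lemma is stated for arbitrary $\om\in\Omega(G)$.
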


\begin{proof}
Let $\om=\sum a_i\om_i=\sum_k c_k\al_k$.
Recall that $c_k=\sum_l a_ld_{lk}$,
$c_k=\frac{2}{(\al_k,\al_k)}(\om,\om_k)$ and   $d_{ij}=\frac{2}{(\al_j,\al_j)}(\om_i,\om_j)$.
Hence the conditions $(\om,\om_k)\geq m(\om_k,\om_k)$
and
$(\om,\om_i)\geq m(\om_k,\om_i)$ in the lemma are equivalent to
$c_k\geq md_{kk}$ and
$c_i\geq md_{ki}$, respectively.
Therefore, we need to show that $c_k\geq md_{kk}$ implies $c_i\geq md_{ki}$ for $i=1,\ldots , n$.
Note that $d_{ij}>0$ for every $i,j$. Then  we get
$$
c_i=\sum_l a_ld_{li}\geq \sum_l a_l(d_{lk}d_{ki}/d_{kk})=
c_k d_{ki}/d_{kk}\geq
(md_{kk}) d_{ki}/d_{kk}=
md_{ki},
$$as required.
\end{proof}

\begin{proof}[Proof of Theorem {\rm\ref{th1}}]
(1). By Lemma \ref{cm9}, $\om\succeq m\om_j$ if and only if  $\om- m\om_j$ is radical and $(\om,\om_i)\geq m(\om_j,\om_i)$ for all $i\in\{1,\ldots , n\}$.  By Lemma \ref{r6},
$(\om,\om_j)\geq m(\om_j,\om_j)$ implies $(\om,\om_i)\geq m(\om_j,\om_i)$ for all $i=1,\ldots , n$,
whence the result. (2) follows from (1) and \cite[Ch. VIII, \S 7.2, Proposition 5(iv)]{Bo8}
(or see  Theorem \ref{prem} for $p=0$). \end{proof}

\begin{remar}\label{rw}
By using the exact values of $(\om_i,\om_j)$ for all $i,j$,
we can rewrite the condition $(\om,\om_j)\geq m(\om_j,\om_j)$
in Theorem \ref{th1} in a more explicit form. For example, let $G=A_n$
and let $\om=\sum b_k\om_k$ be a dominant weight of $G$ such that $\om-m\om_j$ is radical.
We wish to rewrite the condition $(\om,\om_j)\geq m(\om_j,\om_j)$
in terms of $b_k$'s. By using the values of
$(\om_k,\om_j)$ (see the proof of Proposition \ref{an5}), we get:
$$
(n+1)(\om,\om_j)=\sum_{k=1}^{n} b_k(n+1)(\om_k,\om_j)=
\sum_{k=1}^{j} b_kk(n+1-j)+\sum_{k=j+1}^{n} b_kj(n+1-k).
$$
%
%
%
%
Thus,  $(\om,\om_j)\geq m(\om_j,\om_j)$ is equivalent to
$$
\sum_{k=1}^{j} b_kk(n+1-j)+\sum_{k=j+1}^{n} b_kj(n+1-k)\geq mj(n+1-j).
$$
By dividing both parts by $j(n+1-j)$, we get the condition
\begin{equation}\label{m3}
\sum_{k=1}^{j}\frac{k}{j} b_k+\sum_{k=j+1}^{n}\frac{n+1-k}{n+1-j} b_k\geq m.
\end{equation}
All other types are considered similarly.
\end{remar}

\section{Occurrence of weight zero}

Let $V_\lam$ be an irreducible  $G$-module with highest weight $\lam$. Then $\lam=\sum_{i=0}^k p^i\lam_i$, where $k\geq 0$ and $\lam_i$
is a $p$-restricted dominant weight for every $i$. By Steinberg's theorem, $V_\lam=\otimes V_{p^i\lam_i}$.
By Lemma \ref{fr}, we can assume $\lam_0\neq 0$ as $V_\lam$ has weight 0 if and only if  $V_{\lam/p^j}$ has, where $j$ is the minimal $i$ with $\lam_i\neq 0$.
Set $\lam'=\sum_{i>0} p^i\lam_i$, so $V_\lam=V_{\lam_0}\otimes V_{\lam'}$. Obviously, $V_\lam$ has weight 0 if and only if
$V_{\lam_0}$ has a weight $\nu$ such that $-\nu$ is a weight of $V_{\lam'}$. So the problem reduces to that of the existence
of such $\nu$. A well known necessary (but not sufficient)  condition is that $\lam$ is a radical weight.
Recall (see the introduction) that we consider only the case where
$V_{\lam}$ is a tensor product of exactly two tensor-indecomposable irreducible  modules. By Lemma \ref{pp1},
with a few well known exceptions, $V_{\lam_0}$ is tensor-indecomposable, and then
 $V_\lam=V_{\lam_0}\otimes V_{p^i\lam_i}$ for some fixed $i\ge 1$. The exceptions arise only for $p=2$ and $G\in \{B_n,C_n, F_4, G_2\}$ and for $G_2$ for $p=3$, that is, for $p\leq e(G)$, see Theorem \ref{prem}.
The exceptional cases  require a special consideration and will be dealt with in the next section. In this section we consider the regular case of $p> e(G)$.

\begin{lemma}\label{pr1} Let $V$ be an irreducible  $G$-module of highest weight  $\lam=\lam_0+p^i\lam_1$, where
$\lam_0,\lam_1$ are p-restricted dominant weights, $i\ge1$ and  $p> e(G)$. Let
 $\mu=p^i\lam_1$
and  let $\nu$  be the  minimal dominant weight of $V_\mu$.
Then the zero weight occurs in $V$ if and only if  $\nu^*$ is a weight of $V_{\lam_0}$ (equivalently, $\lam_0^*\succeq\nu$).
\end{lemma}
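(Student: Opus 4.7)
The plan is to reduce the existence of the zero weight in $V=V_{\lam_0}\otimes V_\mu$ (by Steinberg's tensor product theorem) to a comparison of the dominant weight sets of the two factors, and then to invoke the smallest dominant weight of $V_\mu$ supplied by Proposition \ref{z27}. Explicitly, $V$ has weight $0$ if and only if there are weights $\tau$ of $V_{\lam_0}$ and $\eta=-\tau$ of $V_\mu$; since $-\tau$ ranges over the weight system of $V_{\lam_0}^*\cong V_{\lam_0^*}$ as $\tau$ ranges over that of $V_{\lam_0}$, this is equivalent to $V_{\lam_0^*}$ and $V_\mu$ sharing a common weight. Every $W$-orbit in $\Omega(G)$ contains a unique dominant representative, so the condition becomes: $V_{\lam_0^*}$ and $V_\mu$ share a common \emph{dominant} weight $\sigma$.

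The next step applies Theorem \ref{prem} (valid since $p>e(G)$ and both $\lam_0^*$ and $\lam_1$ are $p$-restricted), together with Lemma \ref{fr}, to describe these dominant weights explicitly. The dominant weights of $V_{\lam_0^*}$ are $\{\sigma\in\Omega^+(G):\sigma\preceq\lam_0^*\}$, while those of $V_\mu=V_{p^i\lam_1}$ are $\{p^i\tau:\tau\in\Omega^+(G),\ \tau\preceq\lam_1\}$. Hence $V$ has weight $0$ if and only if some dominant weight $\sigma$ of $V_\mu$ satisfies $\sigma\preceq\lam_0^*$.

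Now I would invoke Proposition \ref{z27}: $V_\mu$ is tensor-indecomposable, being a Frobenius twist of $V_{\lam_1}$ (which is tensor-indecomposable by Lemma \ref{pp1}(1) since $p>e(G)$, combined with Remark \ref{rfr}), so it has a unique minimal dominant weight $\nu$. Because the set of dominant weights of $V_\mu$ is finite, uniqueness of a minimal element forces $\nu$ to be the \emph{smallest}: $\nu\preceq\sigma$ for every dominant weight $\sigma$ of $V_\mu$. Consequently, the existence of the desired $\sigma$ is equivalent to $\nu\preceq\lam_0^*$, i.e. $\lam_0^*\succeq\nu$.

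Finally, for the equivalent reformulation in terms of $\nu^*$, I would apply the opposition involution $\tau\mapsto\tau^*=-w_0\tau$, which permutes the simple roots (and the fundamental weights) and hence preserves both $\Omega^+(G)$ and the partial order $\preceq$: the condition $\lam_0^*\succeq\nu$ becomes $\lam_0\succeq\nu^*$, which by Theorem \ref{prem} is precisely the statement that $\nu^*$ is a (dominant) weight of $V_{\lam_0}$. The conceptual core of the argument is the clean reduction in the first step to a shared dominant weight; after that, Proposition \ref{z27} supplies the crucial minimum $\nu$ and the rest is bookkeeping. The main point I expect to require care is verifying that $\nu$ really is the smallest (not just a minimal) dominant weight of $V_\mu$; this is automatic from finiteness once uniqueness of a minimal element is available from Proposition \ref{z27}.
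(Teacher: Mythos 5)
Your proof is correct and follows essentially the same route as the paper's: reduce to the existence of a common dominant weight of $V_{\lam_0^*}$ and $V_\mu$, then use the unique minimal dominant weight $\nu$ supplied by Proposition \ref{z27} together with Premet's theorem (Theorem \ref{prem}) to turn this into the single condition $\nu\preceq\lam_0^*$. Your additional care in checking that ``unique minimal'' implies ``smallest'' and in handling the opposition involution $\nu\mapsto\nu^*$ only makes explicit what the paper leaves implicit.
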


\begin{proof} The ``if'' part. If $\lam_0^*\succeq\nu$ then $V_{\lam_0}$ has weight $-\nu$
\cite[Ch.VIII, \S.7.1, Proposition 11]{Bo8}
and hence $V_{\lam_0}\otimes V_\mu$
has weight 0. Then $V$ has weight 0.

The ``only if'' part. Clearly, if $V$ has weight 0 then there is weight $\si\in V_\mu$ such that $-\si\in V_{\lam_0}$. Using the Weyl group, we can assume  $\si$ dominant. By Proposition \ref{z27}, $V_\mu$ has a unique minimal dominant weight, $\nu$, say. Then $\nu\preceq\si$. On the other hand,  $\si$ is a dominant weight in $V_{\lam_0^*}$.
This implies  $\nu\preceq \si\preceq \lam_0^*$.
Since $p> e(G)$, by Theorem \ref{prem},  $\nu\preceq \lam_0^*$ implies that $\nu $ is a weight of $V_{\lam_0^*}$, as claimed.
\end{proof}

\begin{theo}\label{th6} Let $G$ be a simple algebraic group of rank $n$ and let $V$ be an irreducible  $G$-module of highest weight $\lam=\lam_0+p^i\lam_1$, where
$\lam_0,\lam_1$ are $p$-restricted dominant weights, $i\ge 1$ and  $p> e(G)$.
Then $V$ has weight $0$ if and only if  the weight $\lam $ is radical and one of the following  holds:
\begin{itemize}
\item[$(1)$]    $\lam_1$ is radical;
\item[$(2)$] $\lam_1$ is not radical and
$\lam_0^*\succeq  p^i\om_j$ (equivalently,
$(\lam^*_0,\om_j)\geq p^i(\om_j,\om_j)$), where $\om_j$ is a miniscule weight such that $\lam_1\succeq \om_j$. 
\end{itemize}
\end{theo}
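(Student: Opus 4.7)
The plan is to use Lemma~\ref{pr1} as the central reduction: letting $\nu$ denote the unique minimal dominant weight of $V_{p^i\lam_1}$ (which exists by Proposition~\ref{z27}), the module $V$ has weight~$0$ if and only if $\lam_0^*\succeq\nu$. The first step is to compute $\nu$ explicitly. By Lemma~\ref{fr} the weights of $V_{p^i\lam_1}$ are exactly $p^i$ times the weights of $V_{\lam_1}$, so $\nu=p^i\nu'$ where $\nu'$ is the minimal dominant weight of $V_{\lam_1}$. Since $\lam_1$ is $p$-restricted and $p>e(G)$, Premet's Theorem~\ref{prem} pins $\nu'$ down completely: $\nu'=0$ if $\lam_1$ is radical, and otherwise $\nu'=\om_j$ where $\om_j$ is the unique miniscule fundamental weight with $\lam_1\succeq\om_j$.

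In case~(1), where $\lam_1$ is radical, $\nu=0$, so the condition $\lam_0^*\succeq 0$ amounts to $\lam_0^*$ being a non-negative integer combination of simple roots, which for a dominant weight is the same as saying $\lam_0^*$ (equivalently $\lam_0$) is radical. Combined with the radicality of $p^i\lam_1$ this is in turn equivalent to $\lam$ being radical. In case~(2), where $\lam_1$ is not radical, $\nu=p^i\om_j$ and the condition becomes $\lam_0^*\succeq p^i\om_j$. Applying Theorem~\ref{th1}(1), this is equivalent to the conjunction of (a)~$\lam_0^*-p^i\om_j$ being a radical weight and (b)~$(\lam_0^*,\om_j)\geq p^i(\om_j,\om_j)$.

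The main obstacle is to show that, under the standing hypothesis that $\lam$ is radical, condition~(a) is automatic, so that only~(b) needs to be imposed: this is what justifies the ``equivalently'' in the statement. The plan here is to work in the finite quotient $\Omega/Q$ of the weight lattice by the root lattice $Q$. Since every simple reflection shifts a weight by a root, the Weyl group acts trivially on $\Omega/Q$, and hence $\mu^*=-w_0\mu\equiv -\mu\pmod{Q}$ for every $\mu\in\Omega$. The assumptions $\lam\in Q$ and $\lam_1\succeq\om_j$ give $\lam_0\equiv -p^i\lam_1\equiv -p^i\om_j\pmod{Q}$, and applying~$*$ then yields $\lam_0^*\equiv -\lam_0\equiv p^i\om_j\pmod{Q}$, so indeed $\lam_0^*-p^i\om_j\in Q$. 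Assembling these pieces: the ``only if'' direction follows from the general necessity of $\lam\in Q$ together with the case split above, and the ``if'' direction follows by applying Theorem~\ref{th1}(1) in the other direction, with (a) supplied automatically by the coset computation and (b) supplied by hypothesis, to conclude $\lam_0^*\succeq\nu$ and hence weight~$0$ in~$V$ via Lemma~\ref{pr1}.
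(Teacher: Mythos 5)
Your proof is correct and follows essentially the same route as the paper: reduce via Lemma~\ref{pr1} to whether the minimal dominant weight of $V_{p^i\lam_1}$ (namely $0$ or $p^i\om_j$, identified through Premet's Theorem~\ref{prem} and Lemma~\ref{fr}) is a weight of $V_{\lam_0^*}$, then apply Theorem~\ref{th1}. Your computation in the quotient of the weight lattice by the root lattice, showing that $\lam_0^*-p^i\om_j$ is automatically radical when $\lam$ is radical, is a welcome explicit justification of the ``equivalently'' clause, a hypothesis of Theorem~\ref{th1} that the paper's proof invokes without comment.
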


 \begin{proof} If $\lam_1$ is radical then so is $p^i\lam_1;$ hence $V$ has weight 0 if and only if  $\lam_0$ is radical (see Theorem \ref{prem}).

 Suppose that $\lam_1$ is not radical. By Theorem \ref{prem}, $\lam_1\succeq\om_j$ for some miniscule weight $\om_j$,
 where $j\in\{1,\ldots , n\}$, and $\om_j$ is a dominant weight of $V_{\lam_1}$, obviously  minimal.
 By Lemma \ref{pr1}, $V$ has weight 0 if and only if
 $p^i\om_j$ is a weight of $V_{\lam^*_0}$.
 By Theorem \ref{th1}, this holds if and only if  $(\lam^*_0,\om_j)\geq p^i(\om_j,\om_j)$.
 \end{proof}

We wish to express the condition $(\lam^*_0,\om_j)\geq p^i(\om_j,\om_j)$ in a more
explicit form, as at the end of Section \ref{ss} (with $m=p^i$).
Let $\lam_0=\sum_k a_k\om_k$.
Note that in $G=A_n$ every fundamental weight is miniscule, whereas in $E_8,F_4,G_2$ all weights are radical,
so no miniscule weight exists.
For the other groups these are $\om_n$ in $B_n,$ $\om_1$ in $C_n$, $\om_1,\om_{n-1},\om_n$ in $D_n,$ $\om_1,\om_6$ in $E_6$, $\om_7$ in $E_7$. Note that $\lam_0^*=\lam_0$ if $G$ is of type $B_n,C_n$ or $E_7$.

Let $G=A_n$.
Let $\om=\lam_0^*=\sum_k b_k\om_k$ then $b_k=a_{n+1-k}$ for all $k$, so by swapping $k$ and $n+1-k$ in (\ref{m3}) and substituting $a_k$'s we rewrite the condition
$(\lam^*_0,\om_j)\geq p^i(\om_j,\om_j)$ as
\begin{equation}\label{m4}
\sum_{k=1}^{n-j}\frac{k}{n+1-j}a_{k} +\sum_{k=n+1-j}^{n}\frac{n+1-k}{j}a_{k}\geq p^i.
\end{equation}

Let $G=B_n$.
Then
$(\lam_0,\om_n)=\sum _k a_k(\om_k,\om_n)=\sum_{k=1}^{n-1} a_k\frac{k}{2}+a_n\frac{n}{4} $,
see the proof of Proposition \ref{an5}. So
$(\lam_0,\om_n)\geq p^i(\om_n,\om_n)$ can be written as $\sum_{k=1}^{n-1}ka_k+na_n/2\geq p^in/2 $.

The remaining cases are considered similarly. We summarize our calculations in Table \ref{tc} below.
We write $\lambda_0^*=(b_1,\dots,b_n)$ if
$\lambda_0^*=\sum b_k\om_k$. Recall that in the case $G=D_n$ we have $\lambda_0^*=(a_1,\dots,a_{n-2},a_{n-1}^*,a_n^*)$
where $a_k^*=a_k$ if $n$ is even (i.e. $\lambda_0^*=\lambda_0$);
$a_{n-1}^*=a_n$ and $a_{n}^*=a_{n-1}$  if $n$ is odd.

\noindent
\begin{table}[h]
\centering
\caption{\label{tc}
Condition $(\lam^*_0,\om_j)\geq p^i(\om_j,\om_j)$ for non-radical
$\lam_0=(a_1,\dots,a_n)$ and miniscule $\om_j$.}
\renewcommand{\arraystretch}{1.4}
\begin{tabular}{c|c|c|c}
  \hline
 Type & $\lambda_0^*$ & Miniscule $\om_j$ & $(\lam^*_0,\om_j)\geq p^i(\om_j,\om_j)$ \\\hline \hline
 $A_n$ & 
               $(a_n,a_{n-1},\dots,a_1)$ & $\om_j\ (1\le j\le n)$  &
       $\sum\limits_{k=1}^{n-j}\frac{k}{n+1-j}a_{k} +\sum\limits_{k=n+1-j}^{n}\frac{n+1-k}{j}a_{k}\geq p^i$
                  \\ \hline
 $B_n$ & $\lambda_0^*=\lambda_0$ & $\om_n$  & $\sum_{k=1}^{n-1}ka_k+na_n/2\geq p^in/2 $ \\ \hline
 $C_n$ & $\lambda_0^*=\lambda_0$ & $\om_1$  & $\sum_{k=1}^{n}a_k\geq p^i$  \\ \hline
       &   & $\om_1$ & $\sum_{k=1}^{n-2}a_k+a_{n-1}/2+a_{n}/2\geq p^i$ \\ \cline{3-4}
 $D_n$ & 
         $(a_1,\dots,a_{n-2},a_{n-1}^*,a_n^*)$  & $\om_{n-1}$ & $\sum_{k=1}^{n-2}ka_k+na_{n-1}^*/2 + (n-2)a_n^*/2\geq p^in/2 $ \\ \cline{3-4}
       &  & $\om_n$ & $\sum_{k=1}^{n-2}ka_k+(n-2)a_{n-1}^*/2 + na_n^*/2\geq p^in/2 $ \\ \hline
 $E_6$ & $(a_6,a_2,a_5,a_4,a_3,a_1)$ & $\om_1$ & $2a_1+3a_2+4a_3+6a_4+5a_5+4a_6\geq 4p^i$ \\ \cline{3-4}
       &                               & $\om_6$ & $4a_1+3a_2+5a_3+6a_4+4a_5+2a_6\geq 4p^i$ \\ \hline
 $E_7$ & $\lambda_0^*=\lambda_0$ & $\om_7$ & $2a_1+3a_2+4a_3+6a_4+5a_5+4a_6+3a_7\geq 3p^i$ \\ \hline
\end{tabular}

\end{table}
\begin{remar}
As $a_i<p$, we observe that in Table \ref{tc} the left hand side in the inequality for $E_6$ is at most $24(p-1)$. Thus, if $p^{i}>6p-6$
(i.e. either $i\ge3$ or $i=2$ and $p>3$)
then $V_\lam$ has weight 0 if and only if  both $\lam_0,\lam_1$ are radical.
Similarly, for $E_7$, the left hand side of the inequality
is at most $27(p-1), $ so $p^{i}>9(p-1)$ implies that $V_\lam$ has weight 0 if and only if  both $\lam_0,\lam_1$ are radical.
\end{remar}

\section{Exceptional cases}

In this section we consider the cases with $p\le  e(G)$, i.e.
$G=B_n,C_n, F_4$ for $p=2$ and
$G=G_2$ for $p=2,3$.

Suppose first that $G=B_n,C_n$ and $p=2$.
Recall that $\Omega^+_0$ is the set of $p$-restricted dominant weights $\mu=\sum_{i=1}^n b_i\om_i$ of $G$ such that $b_n=0$.
Let $\mu$ be a $p$-restricted dominant weight of $G$. Then by Lemma \ref{pp1}, $V_{\mu}$
is tensor-indecomposable if and only if   $\mu=\om_n$ or  $\mu\in \Omega^+_0$. Thus, Lemma \ref{pp1} and Remark \ref{rfr} imply the following.

\begin{lemma}\label{ti}
Let $G=B_n,C_n,$ $p=2$, $\lam\in\Omega^+(G)$ and $V=V_{\lam}$.
Then $V$ is the tensor product of exactly two non-trivial tensor-indecomposable irreducible  $G$-modules
if and only if
$\lam=2^k(\lam_0+2^i\lam_1)$ with non-zero $p$-restricted $\lam_0$ and $\lam_1$,
$k,i\ge0$ and one of the following holds:
\begin{itemize}
\item[$(1)$] $i=0$, \ $\lam_0\in \Omega^+_0$ and $\lam_1=\om_n$;
\item[$(2)$] $i\ge1$, \ $\lam_0\in \Omega^+_0$ and $\lam_1=\om_n$;
\item[$(3)$] $i\ge1$, \ $\lam_0\in \Omega^+_0$ and $\lam_1\in \Omega^+_0$;
\item[$(4)$] $i\ge1$, \ $\lam_0=\om_n$ and $\lam_1=\om_n$;
\item[$(5)$] $i\ge1$, \ $\lam_0=\om_n$ and $\lam_1\in \Omega^+_0$.
\end{itemize}
\end{lemma}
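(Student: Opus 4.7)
The plan is to combine Steinberg's tensor product theorem with the classification of tensor-indecomposable $2$-restricted $G$-modules provided by Lemma~\ref{pp1}(3), and then enumerate by counting tensor factors.

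First I would write the $2$-adic expansion $\lam=\sum_{j\geq 0} 2^j\mu_j$ with each $\mu_j$ a $2$-restricted dominant weight. By Steinberg's theorem, $V_\lam\cong\bigotimes_{\mu_j\ne 0} V_{2^j\mu_j}$, and by Remark~\ref{rfr} each factor $V_{2^j\mu_j}$ is tensor-indecomposable if and only if $V_{\mu_j}$ is. Lemma~\ref{pp1}(3) then says that for $p=2$ and $G\in\{B_n,C_n\}$, $V_{\mu_j}$ is tensor-indecomposable precisely when $\mu_j=\om_n$ or $\mu_j\in\Omega_0^+$; otherwise Lemma~\ref{pp1}(2) splits it further as $V_{\mu_j-\om_n}\otimes V_{\om_n}$ with $\mu_j-\om_n\in\Omega_0^+\setminus\{0\}$. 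Consequently each nonzero $\mu_j$ contributes either one or two tensor-indecomposable factors to the canonical decomposition of $V_\lam$.

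Next I would enumerate the ways the total count can equal two. There are two possibilities. \emph{Case (A):} exactly one $\mu_j$ is nonzero and it alone contributes two factors, i.e.\ $\lam=2^k\mu_k$ with $\mu_k\notin\{0,\om_n\}\cup\Omega_0^+$. \emph{Case (B):} exactly two indices $j_0<j_1$ have $\mu_{j_0},\mu_{j_1}\ne 0$, each lying in $\{\om_n\}\cup(\Omega_0^+\setminus\{0\})$. In case~(A), writing $\mu_k=\lam_0+\om_n$ with $\lam_0\in\Omega_0^+\setminus\{0\}$ and setting $i=0$, $\lam_1=\om_n$ gives $\lam=2^k(\lam_0+2^0\lam_1)$ of the form required in~(1). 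In case~(B), setting $k=j_0$, $i=j_1-j_0\ge 1$, $\lam_0=\mu_{j_0}$, $\lam_1=\mu_{j_1}$, the four possible pairings of $\lam_0,\lam_1\in\{\om_n\}\cup(\Omega_0^+\setminus\{0\})$ produce exactly the cases~(2)--(5).

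For the converse direction, given any $\lam=2^k(\lam_0+2^i\lam_1)$ satisfying one of (1)--(5), I would read off the $2$-adic expansion of $\lam$ and apply Steinberg plus Lemma~\ref{pp1} to confirm that $V_\lam$ decomposes as a tensor product of exactly two nontrivial tensor-indecomposable modules. In case~(1) one needs the observation that $\lam_0+\om_n$ is itself $2$-restricted (because $\lam_0\in\Omega_0^+$ forces the $\om_n$-coefficient of $\lam_0$ to be $0$, so no carrying occurs), and then Lemma~\ref{pp1}(2) produces the factorisation $V_{\lam_0+\om_n}=V_{\lam_0}\otimes V_{\om_n}$ with both tensor factors tensor-indecomposable. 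There is no serious obstacle here: once Lemma~\ref{pp1}(3) is invoked to pinpoint exactly which $2$-restricted modules fail to be tensor-indecomposable, the proof reduces to straightforward bookkeeping of $2$-adic digits.
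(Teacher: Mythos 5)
Your proposal is correct and follows essentially the same route as the paper, which simply asserts that the lemma is a consequence of Lemma~\ref{pp1} and Remark~\ref{rfr}; you have filled in precisely that bookkeeping (Steinberg's $2$-adic expansion, indecomposability of Frobenius twists, and the dichotomy of Lemma~\ref{pp1}(3), with Lemma~\ref{pp1}(2) handling the single-digit case~(1)). No gaps.
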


\begin{theo}\label{c22} Let $G=C_n,$ $p=2$, and let $V=V_{\lam}$
where $\lam=\lam_0+2^i\lam_1$ is  as in Lemma {\rm \ref{ti}} with $k=0$.
Then $V_\lam$ has weight $0$ if and only if  $\lam$ is radical and one of the following  holds:
\begin{itemize}
\item[(a)]
 $i\geq 0$, $\lam_0\in \Omega_0^+(G)$, $\lam_1=\om_n$
and $\lam_0\succeq 2^i\om_n$,
equivalently, $(\lam_0,\om_n)\geq 2^in$;
\item[(b)]
 $i\ge1$, $\lam_0,\lam_1\in \Omega_0^+(G)$ and $\lam_1$ is radical;
\item[(c)]
 $i\ge1$, $\lam_0,\lam_1\in \Omega_0^+(G)$,
$\lam_1\succeq \om_1$
and $\lam_0\succeq 2^i\om_1$, equivalently, $(\lam_0,\om_1)\geq 2^i$.
\end{itemize}
\end{theo}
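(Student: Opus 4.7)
The plan is to work through the five configurations of Lemma~\ref{ti} with $k=0$, since the hypothesis that $V$ is the tensor product of two nontrivial tensor-indecomposable irreducibles puts us in exactly one of them. The core principle for cases (1), (2), (3) (where $\lam_0\in\Omega_0^+$) is this: because $-\mathrm{Id}\in W$ for $C_n$, every irreducible $C_n$-module is self-dual, so the tensor product $V_{\lam_0}\otimes V_\mu$ (with $\mu=2^i\om_n$ in cases (1), (2) and $\mu=2^i\lam_1$ in case (3)) has weight zero if and only if $V_{\lam_0}$ and $V_\mu$ admit a common dominant weight.

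Under the hypothesis $\lam_0\in\Omega_0^+$, Lemma~\ref{zz9b} says that every dominant $\nu\preceq\lam_0$ is a weight of $V_{\lam_0}$. Hence ``weight zero in $V$'' is equivalent to the minimum dominant weight $\mu_{\min}$ of $V_\mu$ satisfying $\mu_{\min}\preceq \lam_0$. Using Proposition~\ref{z27} together with the scaling in Lemma~\ref{fr}, I would compute $\mu_{\min}=2^i\om_n$ in cases (1), (2) (the minimum dominant weight of $V_{\om_n}$ is $\om_n$ itself by Proposition~\ref{z27}(1)); $\mu_{\min}=0$ in case (3) when $\lam_1$ is radical; and $\mu_{\min}=2^i\om_1$ in case (3) when $\lam_1$ is not radical (non-exceptional situation of Proposition~\ref{z27}, so the minimum agrees with characteristic $0$). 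Theorem~\ref{th1} then converts each condition $\lam_0\succeq 2^i\om_j$ into the stated inner-product inequality, using $(\om_n,\om_n)=n$ and $(\om_1,\om_1)=1$ for $C_n$. This already yields (a), (b), (c).

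The overall necessary condition that $\lam$ be radical drops out automatically. In (a), $\lam_0\succeq 2^i\om_n$ makes $\lam_0-2^i\om_n$ radical, whence $\lam=(\lam_0-2^i\om_n)+2^{i+1}\om_n$ is radical because $2\om_n$ is already a sum of simple roots in $C_n$. In (b), $\lam_1$ radical combined with $\lam_0$ radical (forced by $\mu_{\min}=0\preceq\lam_0$) gives $\lam$ radical. In (c), both $2^i\om_1$ and $2^i\lam_1$ are radical for $i\ge 1$ (since $2\om_k$ is radical in $C_n$ for every~$k$), so $\lam_0\succeq 2^i\om_1$ forces $\lam_0$ radical and hence $\lam$ radical.

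It remains to dispose of cases (4) and (5) of Lemma~\ref{ti}, where $\lam_0=\om_n$; both must be shown incapable of producing weight zero, so that the theorem's list is exhaustive. In case (4), $\lam_1=\om_n$ also, and every weight of either factor lies in the single Weyl orbit $W\om_n$ or $2^iW\om_n$; since all vectors in $W\om_n$ have the same Euclidean norm $\sqrt n$, a relation $w_1\om_n+2^iw_2\om_n=0$ forces $1=2^i$, absurd for $i\ge 1$. In case (5), writing $\om_n=e_1+\cdots+e_n$ in the standard orthonormal coordinates gives $W\om_n=\{\pm e_1\pm\cdots\pm e_n\}$; passing back to the fundamental weight basis via $e_k=\om_k-\om_{k-1}$, one sees every vector in $W\om_n$ has $\om_n$-coefficient $\pm 1$. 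A weight-zero relation $w\om_n+2^i\si=0$ (with $\si$ a weight of $V_{\lam_1}$) would then force $\si$ to have $\om_n$-coefficient $\pm 1/2^i$, violating integrality of weights for $i\ge 1$. This last arithmetic check in case~(5) is the subtlest step; all other parts of the argument are driven directly by Proposition~\ref{z27}, Lemma~\ref{zz9b} and Theorem~\ref{th1}.
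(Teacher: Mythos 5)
Your proof is correct and follows essentially the same route as the paper's: the same reduction to the five cases of Lemma \ref{ti}, with Lemma \ref{zz9b}, Proposition \ref{z27}, Lemma \ref{fr} and Theorem \ref{th1} doing the work in cases (1)--(3), and an impossibility argument in cases (4) and (5). The only cosmetic differences are that you spell out the self-duality/common-dominant-weight reduction and the automatic radicality of $\lam$, and you exclude cases (4) and (5) by a norm/coefficient computation where the paper simply notes that $V_{\om_n}$ would have to contain a weight of the form $2^i\si$.
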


\begin{proof} We need to consider Cases (1)-(5) in Lemma \ref{ti}.
Case (1) was settled in \cite[Theorem 2.7]{Z19}, but this can be handled uniformly with Case (2).

Consider Cases (1) and (2). We have $\lam_0\in \Omega^+_0$,
$\lam=\lam_0+2^i\om_n$  and $V=V_{\lam_0}\otimes V_{2^i\om_n}$ with $i\ge0$.
Recall that $\om_n$ is the only dominant weight of $V_{\om_n}$
and the weights of $V_{2^i\om_n}$ form a single orbit under the Weyl group of $G$.
Hence $V$ has weight 0 if and only if  $2^i\om_n$ is a weight of $V_{\lam_0}$.
By Lemma \ref{zz9b}, this holds if and only if  $\lam_0\succeq 2^i\om_n$ (as $\lam_0\neq \om_n$).
By Theorem \ref{th1}, this is equivalent to
the inequality $(\lam_0,\om_n)\geq 2^i(\om_n,\om_n)$. As $(\om_n,\om_n)=n$,
we get (a).

Consider Case (3). Then
it follows from Lemma \ref{zz9b} that $V_{\lam_0}$, respectively, $ V_{2^i\lam_1}$ has weight 0 if and only if  $\lam_0$, respectively, $ \lam_1$, is radical. If  $ \lam_1$ is radical then $V_{2^i\lam_1}$ has weight 0. In this case $V$ has weight 0 if and only if  so has $V_{\lam_0}$,
which is equivalent to $\lam_0$ to be radical by Lemma \ref{zz9b}. Thus, we get (b).

Suppose that $\lam_1$ is not radical. Then $\lam_1\succ \om_1$.  As in the proof of Lemma \ref{pr1}, let $\si$ be a common weight of $V_{\lam_0}$ and $ V_{2^i\lam_1}$, which can be assumed to be dominant. Then  $ 2^i\om_1\preceq\si$. As $\si\preceq \lam_0$,
we conclude by Lemma \ref{zz9b} that $2^i\om_1$ is a weight of $V_{\lam_0}$.
 As above, this is equivalent to
the inequality $(\lam_0,\om_1)\geq 2^i(\om_1,\om_1)=2^i$ by Theorem \ref{th1}, and we get (c).

In Cases (4) and (5), $V_\lam$ cannot have weight $0$ as otherwise  $V_{\om_n}$ would contain
the weight  $2^i\si$ for some dominant weight $\si\preceq \lam_1$.
\end{proof}

\begin{theo}\label{b22} Let $G=B_n,$ $p=2$, and let $V=V_{\lam}$
where $\lam=\lam_0+2^i\lam_1$ is  as in Lemma {\rm \ref{ti}} with $k=0$.
Then $V_\lam$ has weight $0$ if and only if  one of the following  holds:
\begin{itemize}
\item[(a)]  $i\ge1$, $\lam_0\in \Omega_0^+(G)$, $\lam_1=\om_n$,
$\lam_0^C+2^{i-1}\om_n^C$ is a radical weight of $C_n$,
$\lam_0^C\succeq 2^{i-1}\om_n^C$;
\item[(b)]
 $i\ge1$, $\lam_0,\lam_1\in \Omega_0^+(G)$,
 $\lam_0^C+2^{i}\lam_1^C$ is a radical weight of $C_n$ satisfying
 the conditions  of Theorem {\rm \ref{c22}(b)(c)}.
\end{itemize}
\end{theo}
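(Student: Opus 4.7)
The plan is to handle the five cases listed in Lemma \ref{ti} separately: rule out cases (1), (4), (5) by non-radicality of $\lam$, and reduce cases (2), (3) to the already-proved Theorem \ref{c22} via the special isogeny $\varphi_{BC}:B_n\to C_n$ of Theorem \ref{BC}.

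First I would dispose of the negative cases. Recall from Table \ref{tr} that $\om_n$ is not a radical weight of $B_n$, whereas every $\om_j$ with $j<n$ is, so in particular any weight in $\Omega_0^+(G)$ is radical. Hence in case (1), $\lam=\lam_0+\om_n$ is a radical weight plus $\om_n$; in case (4), $\lam=(1+2^i)\om_n$ with $1+2^i$ odd (since $i\ge 1$); and in case (5), $\lam=\om_n+2^i\lam_1$ with $2^i\lam_1$ radical. In all three cases $\lam$ fails to be radical, so $V_\lam$ cannot have weight zero, which is consistent with these cases being absent from the statement.

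For cases (2) and (3) I would invoke Theorem \ref{BC}(3), which gives $\mu=\varphi_{BC}^*(\mu^C)$ for every $\mu\in\Omega_0^+(G)$, together with the identity $\varphi_{BC}^*(\om_n^C)=2\om_n$. In case (2) these rewrite $\lam$ as $\varphi_{BC}^*(\lam_0^C+2^{i-1}\om_n^C)$, and in case (3) as $\varphi_{BC}^*(\lam_0^C+2^i\lam_1^C)$. Setting $\mu$ equal to the $C_n$-weight appearing inside $\varphi_{BC}^*$, Theorem \ref{BC}(1) reduces the problem to whether $V_\mu$ has weight zero. The weight $\mu$ fits one of the cases of Lemma \ref{ti} for $C_n$: case (1) or (2) with exponent $i-1\ge 0$ in the first situation, case (3) in the second. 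Hence Theorem \ref{c22}(a) applies in case (2) and Theorem \ref{c22}(b)(c) in case (3), and the resulting criteria are precisely conditions (a) and (b) of the statement.

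The main subtlety I anticipate is verifying that the pulled-back module $V_\mu$ on the $C_n$-side satisfies the tensor-product hypothesis of Theorem \ref{c22}, i.e.\ that it is a tensor product of exactly two nontrivial tensor-indecomposable factors. When $i=1$ in case (2) the $C_n$-exponent drops to $0$, so one must appeal to Lemma \ref{pp1}(2)(3) to decompose $V_\mu=V_{\lam_0^C}\otimes V_{\om_n^C}$ via the short/long root splitting; nontriviality of both factors reduces to $\lam_0\ne 0$, which is built into the hypothesis that $V_\lam$ has two nontrivial tensor-indecomposable factors on the $B_n$-side. Once these bookkeeping points are settled, the equivalence between Theorem \ref{b22} and the corresponding parts of Theorem \ref{c22} under the isogeny is immediate.
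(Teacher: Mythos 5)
Your proof is correct and follows essentially the same route as the paper: both reduce cases (2) and (3) of Lemma \ref{ti} to Theorem \ref{c22} via the special isogeny of Theorem \ref{BC} (you pull back along $\varphi_{BC}$, while the paper pushes forward along $\varphi_{CB}$ and then halves the resulting highest weight, which amounts to the same computation). Your disposal of cases (1), (4), (5) by noting that $\lam$ then has odd $\om_n$-coefficient and so is not radical is a slightly more direct shortcut than the paper's, which maps those cases to $C_n$ and invokes Theorem \ref{c22} there; both are valid.
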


\begin{proof} We will consider Cases (1)-(5) in Lemma \ref{ti}.
By Theorem \ref{BC}, $V_\lam$ has weight $0$ if and only if  the $C_n$-module
 $U=\varphi^*_{CB}(V_\lam)$ has   weight $0$.
Moreover, $U$ is an irreducible $C_n$-module with highest weight $\varphi^*_{CB}(\lam)$.
Recall that
 $\varphi_{CB}^*(\sum_{i=1}^{n}a_i\om_i^B)=\sum_{i=1}^{n-1}2a_i\om_i^C+a_n\om_n^C$.

In Cases (1), (4) and (5) we have, respectively, \\
\begin{tabular}{l}
$\quad$ $i=0$ and $\varphi^*_{CB}(\lam)=\om_n^C+2\lam_0^C$ with $\lam_0\in \Omega^+_0$, \\
$\quad$ $i\ge1$ and $\varphi^*_{CB}(\lam)=\om_n^C+2^i\om_n^C$, \\
$\quad$ $i\ge1$ and $\varphi^*_{CB}(\lam)=\om_n^C+2^{i+1}\lam_1^C$ with $\lam_1\in \Omega^+_0$.
\end{tabular}
\\
By Theorem \ref{c22}, $U$ cannot have weight 0 in all these cases.

Consider Case (2). Then  $i\ge1$ and $\varphi^*_{CB}(\lam)=2\lam_0^C+2^i\om_n^C$
with $\lam_0\in \Omega^+_0$. Let $\mu=\lam_0^C+2^{i-1}\om_n^C$.
Then $U$ has weight 0 if and only if  the $C_n$-module
 $V_\mu$ has   weight $0$. The corresponding condition is given in
 Theorem \ref{c22}(a), so we get the part (a) of the theorem.

Consider Case (3). Then  $i\ge1$ and $\varphi^*_{CB}(\lam)=\lam^C=\lam_0^C+2^i\lam_1^C$
with $\lam_0,\lam_1\in \Omega^+_0$.
Then $U$ has weight 0 if and only if  $\lam^C$ satisfies the conditions of Theorem \ref{c22}(b)(c),
so we get the part (b) of the theorem.
\end{proof}

Next we consider the exceptional cases for $G=F_4,G_2$.

\begin{lemma}\label{ex3} If  $G=F_4$ for $p=2$ or $G_2$ with $p=3$ then every irreducible
$G$-module has weight $0$.
\end{lemma}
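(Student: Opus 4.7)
The plan is to reduce to the $p$-restricted tensor-indecomposable case and then apply Proposition~\ref{zz9}, observing that for $F_4$ and $G_2$ every dominant weight is radical (since the weight lattice equals the root lattice) and that none of the exceptional cases~(1)--(3) of Proposition~\ref{zz9} is met in our situation.

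First I would invoke Steinberg's tensor product theorem to write an arbitrary irreducible $G$-module $V_\om$ as $V_{\lam_0}\otimes V_{p\lam_1}\otimes\cdots\otimes V_{p^k\lam_k}$ with $\lam_0,\ldots,\lam_k$ $p$-restricted. Since the tensor product of modules having weight $0$ again has weight $0$, and since by Lemma~\ref{fr} $V_{p^i\lam_i}$ has weight $0$ iff $V_{\lam_i}$ does, it suffices to prove that every $p$-restricted irreducible $G$-module has weight $0$.

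Next, because $p=e(G)$ in both cases, Lemma~\ref{pp1}(2) further decomposes a $p$-restricted $V_\om$ as $V_{\lam_1}\otimes V_{\lam_2}$, where $\lam_1$ (resp.\ $\lam_2$) is the part of $\om$ supported on fundamental weights attached to short (resp.\ long) simple roots, and each factor is tensor-indecomposable. So the task reduces further to showing: every tensor-indecomposable $p$-restricted irreducible module has weight $0$ when $G=F_4$, $p=2$ or $G=G_2$, $p=3$.

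This is now an immediate application of Proposition~\ref{zz9}. For $G=F_4$ or $G=G_2$ the weight lattice coincides with the root lattice, so every dominant weight is automatically radical; hence the main hypothesis of Proposition~\ref{zz9} is satisfied. It remains to check that none of the three exceptional configurations in Proposition~\ref{zz9} occurs, which is clear: exceptions~(1) and (2) require $G\in\{B_n,C_n\}$, and exception~(3) requires $p=2$ (so in particular is excluded in our case $G=G_2$, $p=3$). Hence every tensor-indecomposable $p$-restricted irreducible $F_4$-module ($p=2$) and every tensor-indecomposable $p$-restricted irreducible $G_2$-module ($p=3$) has weight $0$, and the lemma follows.

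There is no real obstacle here beyond bookkeeping: the substantive work was done in Lemma~\ref{pp1}, Lemma~\ref{fr}, and Proposition~\ref{zz9}, and the only thing to verify is that the exceptional list is disjoint from the two cases under consideration.
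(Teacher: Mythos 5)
Your proof is correct and follows essentially the same route as the paper, which simply cites Proposition~\ref{zz9} and leaves the reduction to the tensor-indecomposable $p$-restricted case implicit; you have merely spelled out that reduction (via Steinberg's theorem, Lemma~\ref{fr} and Lemma~\ref{pp1}) and verified explicitly that the exceptional cases of Proposition~\ref{zz9} do not occur.
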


\begin{proof} By Proposition \ref{zz9}, in these cases every irreducible representation  of $G$  has weight 0, so the result follows.
\end{proof}

We are left with $G_2$ with $p=2$. Then the 2-restricted weights are $0,\om_1,\om_2$ and $\om_1+\om_2$,
and if $\lam$ is 2-restricted then  $V_{\lam}$  has weight 0 if and only if  $\lam\neq\om_1$.
Note that the subdominant weights of
$\om_1+\om_2$ are $2\om_1,\om_2,\om_1,0$, and those of $\om_2$ are $\om_1,0$.

\begin{lemma}\label{g22a} Let $\lam=\lam_0+2^i\lam_1$, where $i\geq 1$ and $\lam_0,\lam_1\neq 0$ are $2$-restricted dominant weights.
Then $V_{\lam}$ has weight $0$ if and only if  $(1)$ $\om_1\notin\{\lam_0, \lam_1\}$ or $(2)$ $i=1$, $\lam_0=\om_1+\om_2$, $\lam_1=\om_1$.
\end{lemma}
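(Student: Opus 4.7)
The plan is as follows. Writing $V=V_{\lam_0}\otimes V_{2^i\lam_1}$ and applying Lemma \ref{fr}, $V$ has weight zero if and only if there exist weights $\mu$ of $V_{\lam_0}$ and $\nu$ of $V_{\lam_1}$ with $\mu+2^i\nu=0$. Since $-1$ lies in the Weyl group of $G_2$, every weight system is symmetric around the origin, so this rewrites as: there exists a weight $\nu$ of $V_{\lam_1}$ such that $2^i\nu$ is a weight of $V_{\lam_0}$. The $2$-restricted non-zero dominant weights being $\om_1,\om_2,\om_1+\om_2$, there are nine cases for the pair $(\lam_0,\lam_1)$.

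First I would describe the weight system of each of the three $2$-restricted irreducibles. By Proposition \ref{z27}, $V_{\om_1}$ has $\om_1$ as its only dominant weight, so its weight system is the single $W$-orbit of short roots and in particular does not contain $0$. For $\om_2$ and $\om_1+\om_2$, which are radical and fall outside the exceptional case of Proposition \ref{z27}, the minimal dominant weight is $0$, so both $V_{\om_2}$ and $V_{\om_1+\om_2}$ carry the zero weight. To handle case $(2)$ of the lemma I would also need that $2\om_1$ is a weight of $V_{\om_1+\om_2}$; since $\om_1+\om_2=(p-1)\rho$ for $p=2$, this module is the Steinberg module, whose formal character coincides with the Weyl character, so by Theorem \ref{prem} every subdominant weight of $\om_1+\om_2$ occurs, including $2\om_1$.

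The key tool is then a length comparison. Normalise the Weyl-invariant form so that short roots have squared length $2$; the squared lengths of the orbits listed above are $0,2,6,8,14$, corresponding to $0$, short roots, long roots $=W\om_2$, $W(2\om_1)$, and $W(\om_1+\om_2)$ respectively. If $\nu\ne 0$ is a weight of $V_{\lam_1}$, then $|2^i\nu|^2=4^i|\nu|^2\geq 4\cdot 2=8$. This already excludes the three cases with $\lam_0=\om_1$ (where every non-zero weight length equals $2$) and also the case $(\lam_0,\lam_1)=(\om_2,\om_1)$ (weight lengths $0,2,6$ cannot accommodate $2\cdot 4^i\geq 8$). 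In the remaining case $(\lam_0,\lam_1)=(\om_1+\om_2,\om_1)$ the value $2\cdot 4^i$ must hit one of $\{0,2,6,8,14\}$, which happens only for $i=1$ (giving $8$); then $2\nu$ lies in the orbit $W(2\om_1)$ and is indeed a weight of $V_{\om_1+\om_2}$ by the paragraph above. The option $\nu=0$ produces a zero weight in $V$ precisely when $0$ occurs in both tensor factors, i.e.\ when $\lam_0\neq\om_1$ and $\lam_1\neq\om_1$.

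Assembling these observations yields exactly the dichotomy $(1)$--$(2)$ of the lemma. The main obstacle is verifying that $2\om_1$ actually appears in $V_{\om_1+\om_2}$ for $p=2$; this is resolved by the Steinberg identification, and the rest is a straightforward length check in the $G_2$ weight lattice together with the catalogue of weight systems furnished by Proposition \ref{z27}.
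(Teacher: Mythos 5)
Your proof is correct and follows essentially the same route as the paper: an explicit case check over the nine pairs $(\lam_0,\lam_1)$ using the known weight systems of $V_{\om_1}$, $V_{\om_2}$ and $V_{\om_1+\om_2}$ for $p=2$. The only difference is cosmetic: where the paper eliminates the impossible cases by observing that $V_{\om_1}$ has no weight divisible by $2$ in the weight lattice (and settles the case $\lam_0=\om_1+\om_2$, $\lam_1=\om_1$ "by inspection"), you use the $W$-invariant quadratic form and the Steinberg-module identification of $V_{\om_1+\om_2}$ to the same effect, which is a perfectly valid and somewhat more self-contained justification.
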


\begin{proof} Note that $V_{\om_1}$ has no weight $\si=2a_1\om_1+2a_2\om_2$ with $a_1,a_2\in\ZZ$. Suppose  that $i>1$ or $i=1$
and $\lam_0=\om_1$. Then
 $V_{\lam}$ has weight $0$ if and only if
each $V_{\lam_0}$ and $V_{\lam_1}$ has weight 0. This happens if and only if  (1) holds.

Let $i=1$ and $\lam_0\neq\om_1$.  Then $V_{\lam_0}$ has weight 0. As $V_{\lam_1}$ has weight 0 if and only if   $\lam_1\neq\om_1$, we are left with the case $\lam_1=\om_1$. Using the Weyl group we observe that  $V_{\lam_0}$ must have weight $2\om_1$. By inspection, we conclude that this happens if and only if  $\lam_0=\om_1+\om_2$. \end{proof}

\begin{proof}[Proof of Theorems $\ref{th7}$ and $\ref{th8}$] These follow from combining together Theorems \ref{th6}, \ref{c22}, \ref{b22} and Lemmas \ref{ex3}, \ref{g22a}.  \end{proof}



For reader's convenience we provide in Table \ref{tr} the well known conditions for a weight to be radical.

\begin{table}[h]
   \centering
 \caption{\label{tr} Conditions for a weight $\lam = \sum a_i \omega_i$ to be radical.}
 \renewcommand{\arraystretch}{1.4}

\begin{tabular}{|c|c|}
\hline
$A_n$ & $\sum_{i=1}^n i a_i \equiv 0 \pmod{n+1}$ \\
\hline
$B_n$ & $a_n \equiv 0 \pmod{2}$ \\
\hline
$C_n$ & $\sum_{i=1}^{ n } ia_i
 \equiv 0 \pmod{2}$ \\
 \hline
 $D_{2n}$ & $\sum_{i=0}^{n-2} a_{2i+1}\equiv  a_{2n-1} \equiv  a_{2n}  \pmod{2}$\\
 \hline
$D_{2n+1}$ & $2 (\sum_{i=0}^{n-1} a_{2i+1}) + a_{2n} - a_{2n+1} \equiv 0 \pmod{4}$ \\
\hline
$E_6$ & $a_1 - a_3 + a_5 - a_6 \equiv 0 \pmod{3}$ \\
\hline
$E_7$ & $a_2 + a_5 + a_7 \equiv 0 \pmod{2}$ \\\hline
$E_8$, $F_4$, $G_2$ & every weight is radical \\
\hline
\end{tabular}

\end{table}


 \end{document}